    \newtheorem{Lem}{Lemma}[section]
    \newtheorem{Lem-Def}{Lemma-Definition}[section]
    \newtheorem{Prop}[Lem]{Proposition}
      \newtheorem*{thm}{Theorem}
    \newtheorem{Thm}[Lem]{Theorem}  
    \newtheorem{Cor}[Lem]{Corollary}
\theoremstyle{definition}
\font\smallsc=cmcsc10
\font\smallsl=cmsl10
    \newtheorem{Rem}[Lem]{Remark}
\newcommand{\Spec}{\text{Spec}\,}
\newcommand{\A}{\mathcal A}
\newcommand{\Z}{\mathcal Z}
\newcommand{\I}{\mathcal I}
\newcommand{\M}{\mathcal M}
\newcommand{\N}{\mathcal N}
\renewcommand{\L}{\mathcal L}
\renewcommand{\O}{\mathcal O}
\newcommand{\C}{\mathcal C}
\newcommand{\col}{\colon}
\newcommand{\ra}{\rightarrow}
\newcommand{\ol}{\overline}
\newcommand{\ul}{\underline}
\newcommand{\wt}{\widetilde}
\newcommand{\wh}{\widehat}
\newcommand{\dra}{\dashrightarrow}
\newcommand{\J}{\ol J}
\newcommand{\lra}{\longrightarrow}
\renewcommand{\:}{\colon}
\renewcommand{\l}{\ell}
\newcommand{\Jb}{\overline{\mathcal{J}_{\ul{e}}}}
\begin{document}

\title{On the geometry of Abel maps for nodal curves}
\author{Alex Abreu, Juliana Coelho and Marco Pacini\footnote{The third author was partially supported by CNPq, processo 300714/2010-6.}}
\date{}

\maketitle

\begin{abstract}
\noindent
In this paper we give local conditions to the existence of Abel maps for smoothings of nodal curves extending the Abel maps for the generic fiber. We use this result to construct Abel maps of any degree for nodal curves with two components.
\end{abstract}

%Keywords:  Plane curve,  inflection line, limit canonical systems.   
%Mathematical Subject Classification (2010):  14H50

\section{Introduction}

\subsection{History}
  Let $C$ be a smooth projective curve over an algebraically closed field and fix a point $P$ in $C$. A degree-$d$ Abel map is a map $\alpha^d_L\col C^d\to J_C$ from the product of $d$ copies of $C$ to its Jacobian $J_C$, sending $(Q_1,\ldots,Q_d)$ to the invertible sheaf $L(dP-Q_1-\ldots-Q_d)$, where $L$ is an invertible sheaf on $C$. It is classically known that this map encodes many geometric properties of the curve $C$. For instance, the Abel theorem states that the fibers of $\alpha^d_L$ are complete linear series on $C$, up to the action of the $d$-th symetric group. Thus, all possible embeddings of $C$ in projective spaces are known once we know its Abel maps. \par
   Often, to study linear series on smooth curves, we resort to degenerations to singular curves. Then, it is important to understand how linear series behave under such degenerations. It was through the study of these degenerations that Griffiths and Harris proved the celebrated Brill-Noether theorem in \cite{GH}, and later Gieseker proved Petri's conjecture in \cite{Gi}. This inspired the seminal work of Eisenbud and Harris \cite{EH} where they introduced the theory of limit linear series for curves of compact type. Nevertheless, a satisfatory general theory of limit linear series has not yet been obtained, although there are several works in this direction for curves with two components, for instance Coppens and Gatto in \cite{CG} and Esteves and Medeiros in \cite{EM}. More recently, Osserman gave in \cite{O} a more refined notion of limit linear series for a curve of compact type with two components.\par
   Since there is a relationship between linear series and Abel maps for smooth curves, it is expected an interplay between limit linear series and Abel maps for singular curves. This interplay was explored by Esteves and Osserman \cite{EO} for curves of compact type with two components, for which natural Abel maps exist. However Abel maps for singular curves have been constructed only in few cases: for irreducible curves in \cite{AK}, in degree one in \cite{CE} and \cite{CCE}, in degree two in \cite{Co}, \cite{CEP} and \cite{Pac}, and for curves of compact type and in any degree in \cite{CP}. \par
   There are two main compactifications of the Jacobian employed as targets of these Abel maps, namely Caporaso-Pandharipande's compactified Jacobian constructed in \cite{C} and \cite{Pand} and Esteves' compactified Jacobian constructed in \cite{E01} based on the previous work \cite{AK} of Altman and Kleiman. The principal goal of this paper is to construct Abel maps of any degree for curves with two components with Esteves' compactified Jacobian as target.\par
    In a different paper, we plan to describe the fibers of this map and unreveal their relationship with degenerations of linear series on smooth curves, in the spirit of the paper \cite{EO} and possibly compare the results with the work of Esteves and Medeiros in \cite{EM}.

\subsection{Main results}
  Let us explain in details our main results. Let $C$ be a nodal curve over an algebraically closed field $K$. Let $\pi\col\C\to B$ be a family of curves over $B:=\Spec(K[[t]])$ with smooth total space $\C$ and $C$ as special fiber. Let $\sigma\col B\to\C$ be a section of $\pi$ through its smooth locus and $\L$ be an invertible sheaf on $\C$ of relative degree $e$. Since the generic fiber of $\pi$ is a pointed smooth curve, there exists a rational Abel map $\alpha^d_\L\col \C^d\dashrightarrow \Jb$ from the product of $d$ copies of $\C$ over $B$ to the compactified Jacobian of $\pi$. Here, $\Jb$ is the fine moduli scheme, introduced by Esteves in \cite{E01}, parametrizing rank-$1$ torsion-free sheaves of degree $e$ that are $\sigma$-quasistable with respect to a polarization $\ul{e}$ of degree $e$ (see Section \ref{sec:Jac} for more details).\par
   We will resolve the map $\alpha^d_\L$ in the case where $C$ has two smooth components $C_1$ and $C_2$. To do that we construct a desingularization $\wt\C^d$ of $\C^d$ recursively on $d$. More precisely, we will perform a sequence of blowups along Weil divisors as follows.
   Set $\wt\C^1:=\C^1$. Assume that $\wt\C^d$ is constructed and let $\wt{\C}^{d+1}\to\wt{\C}^d\times_B\C$ be the sequence of blowups along the strict transforms of the following Weil divisors in the stated order
   \begin{equation*}
      \Delta_{d,d+1},\Delta_{d-1,d+1},\ldots,\Delta_{1,d+1},
   \end{equation*}
   and then
   \begin{equation*}
      C_1^{d+1}, C_1^{d}\times C_2,C_1^{d-1}\times C_2\times C_1, C_1^{d-1}\times C_2^2, \ldots, C_2^{d-1}\times C_1\times C_2,C_2^{d}\times C_1,C_2^{d+1},
      \end{equation*}
      where $\Delta_{i,d+1}$ is the ``$i$-th diagonal'', i.e., the image of the section $\wt{\C}^d\to\wt{\C}^d\times_B\C$, induced by the composition $\delta_i\col\wt{\C}^d\to\C^d\to \C$ of the  desingularization map with the projection onto the $i$-th factor.\par

\begin{thm}
 There exists a modular map $\ol{\alpha}^d_\L\colon\wt{\C}^d\to \Jb$ extending the map $\alpha^d_\L$.
\end{thm}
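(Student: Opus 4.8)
The plan is to produce $\ol{\alpha}^d_\L$ as a \emph{classifying map}: since $\Jb$ is a fine moduli scheme, a morphism $\wt\C^d\to\Jb$ is the same datum as a sheaf on the universal curve $\wt\C^d\times_B\C\to\wt\C^d$, flat over $\wt\C^d$, whose every geometric fiber is a rank-$1$ torsion-free $\sigma$-quasistable sheaf of degree $e$; such a map is then automatically modular. So I would exhibit one such sheaf $\F_d$ on $\wt\C^d\times_B\C$ and check that it restricts to the naive Abel sheaf $\L(d\sigma-Q_1-\cdots-Q_d)$ on the open locus where $\alpha^d_\L$ is already defined, which forces its classifying map to extend $\alpha^d_\L$. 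By the local conditions established earlier, flatness and fiberwise torsion-freeness of $\F_d$ are conditions to be checked node by node on $C$, so the global problem reduces to a computation in the local ring at a single node where $C_1$ and $C_2$ meet.

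I would build $\F_d$ by induction on $d$, following the recursive blow-up construction of $\wt\C^d$. For $d=1$ set $\F_1:=\L(\sigma)\ot\I_\Delta$ on $\C\times_B\C$, with $\I_\Delta$ the ideal sheaf of the diagonal: fiberwise this is $\L(\sigma-Q)$ at a smooth point $Q$ and the (torsion-free) ideal of the node when $Q$ degenerates to a node, and the degree is $e$ throughout. For the inductive step, pull $\F_d$ back to the universal curve $\wt\C^{d+1}\times_B\C$ along $\wt\C^{d+1}\to\wt\C^d$, tensor with the ideal sheaf of the graph of the new point $\delta_{d+1}$ to subtract it (as in the case $d=1$), and correct the result by the line bundles of the exceptional divisors of the two families of blow-ups defining $\wt\C^{d+1}$. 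The blow-ups along the diagonals $\Delta_{i,d+1}$ resolve the indeterminacy occurring when the new point collides with an earlier one and keep each fiber torsion-free of the correct degree; the blow-ups along the component products $C_1^{d+1},C_1^{d}\times C_2,\ldots,C_2^{d+1}$, performed in exactly this order, redistribute the multidegree of the fiberwise sheaf between $C_1$ and $C_2$.

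It then remains to verify three things. First, flatness of $\F_d$ over $\wt\C^d$, which follows once the fibers have constant Hilbert polynomial, guaranteed by the total degree being $e$ at every stage. Second, fiberwise torsion-freeness and rank one, which by the local criterion is an explicit calculation of the Fitting ideals of $\F_d$ in the local model at a node, tracking the effect of each blow-up on the local ring. Third --- and this is the crux --- $\sigma$-quasistability of every fiber. The main obstacle is the most degenerate locus, where several of $Q_1,\dots,Q_d$ coincide and simultaneously specialize to a node in various distributions between $C_1$ and $C_2$: quasistability is a global inequality on the multidegree relative to the polarization $\ul e$ and the section $\sigma$, and the whole purpose of the prescribed ordering of the component blow-ups is to force the exceptional twists onto precisely the component that makes these inequalities hold, with the strict inequality on the side of the component carrying $\sigma$ dictated by $\sigma$-quasistability. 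Carrying this multidegree bookkeeping through the entire tower of blow-ups, and checking the boundary cases that separate quasistability from ordinary stability, is where the substance of the proof lies.
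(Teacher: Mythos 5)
Your global framework coincides with the paper's: use that $\Jb$ is a fine moduli space, so the desired map is the classifying map of a flat family of $\sigma$-quasistable sheaves, and reduce all verifications to local computations at the deepest degenerations. But there are genuine gaps, the first in the construction of the candidate sheaf itself. (a) Already over $\dot\C^d$ (all $d$ points at smooth points of $C$, so no blow-up is involved) the naive sheaf $\L|_{\C_b}(d\sigma(b)-Q_1-\cdots-Q_d)$ is in general \emph{not} $\sigma$-quasistable --- this is stated explicitly in Section~\ref{sec:Jac} --- so the map $\alpha^d_\L$ on $\dot\C^d$ is the classifying map of the corrected sheaf $\M$, twisted by the divisors $Z_{\ul{i}}$ of \eqref{eq:z}. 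Your corrections are "line bundles of the exceptional divisors of the blow-ups", and these are all trivial over $\dot\C^d$, so your $\F_d$ admits no classifying map there at all; no later correction at the blown-up loci can repair this, and indeed the paper's sheaf $\wt\M$ in \eqref{eq:mt} carries the strict transforms $\ol{Z}_j$ of \eqref{eq:zj} precisely for this reason. (b) Your inductive step tensors ideal sheaves of graphs on the singular space $\wt\C^{d+1}\times_B\C$; on a fiber where two of the points collide at a node this creates torsion (in $K[[x,y]]/(xy)$ the element $x\otimes y$ of $(x,y)\otimes(x,y)$ is a nonzero torsion element), and twisting by line bundles never removes torsion. The paper's device to avoid exactly this is structural: the next stage $\wt\C^{d+1}$ of the tower \emph{is} a desingularization of the universal curve $\wt\C^d\times_B\C$, so one works with the \emph{invertible} sheaf $\wt\M$ on $\wt\C^{d+1}$ (there the strict transforms of the diagonals are Cartier) and takes the family to be the pushforward $\phi_*\wt\M$, whose fibers are torsion-free, rank $1$ and quasistable by Proposition~\ref{prop:pushforward}.

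Second, and decisively: the step you label "where the substance of the proof lies" is not carried out, and it constitutes most of the paper's Section 5. The paper converts quasistability of the fibers of $\phi_*\wt\M$ into two purely numerical conditions (Theorem~\ref{thm:map}) on the invariants $a^N_j(Y)$ and $b^N_j(Y,\L)$, and then verifies these at every special point of $\wt\C^d$ by induction on $d$: Lemma~\ref{lem:ordering} and Proposition~\ref{prop:main} show that, because of the prescribed order of the blow-ups, the $\l$-ordering of the local coordinates $[u_1],\ldots,[u_{d+1}]$ is a cyclic permutation of the lexicographic one and the invariants $a^\l_{[u_j],R}$ are monotone with total variation at most $1$; feeding this and Proposition~\ref{prop:generic} into conditions (1) and (2) of Theorem~\ref{thm:map} closes the argument. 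Your final paragraph restates this problem ("carrying the multidegree bookkeeping through the tower", "checking the boundary cases") but offers no mechanism for it: nowhere do you extract from the blow-up order a combinatorial statement that could actually be proved by induction. As it stands, the proposal is a correct reduction plus a declaration of the theorem's hardest part, not a proof of it.
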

  We note that the order in which these Weil divisors are blown up is important to the resolution of the map. Indeed, it is not difficult to find examples in which a different sequence of blowups does not give rise to a resolution. Moreover, the desingularization $\wt\C^d$ is independent of the polarization $\ul{e}$ and the sheaf $\L$. We refer to \cite[Section 7]{CEP} for examples of resolutions for more general curves in degree 2.\par\smallskip

   In order to prove the result we consider a local formulation of our problem. Indeed, we note that the completion of the local ring of $\wt\C^d$ at a point is given by $K[[u_1,\ldots,u_{d+1}]]$, and, in the relevant cases, the map $\wt{\C}^d\to B$ is given by $t=u_1\cdot\ldots\cdot u_{d+1}$. For this reason we consider $S:=\Spec(K[[u_1,\ldots,u_{d+1}]])$ and the map $S\to B$ given by $t=u_1\cdot\ldots\cdot u_{d+1}$. Let $\C_S:=\C\times_B S$ and $\delta_1,\ldots,\delta_m$ be sections of $\pi_S\col\C_S\to S$. Since the generic fiber $\C_\eta$ over the generic point $\eta$ of $B$ is smooth, we have a rational map $\alpha_\L\col S\dashrightarrow \Jb$ sending $\eta_S$, the generic point of $S$, to the invertible sheaf 
 \[
 \L|_{\C_\eta}(m\sigma(\eta)-\delta_1(\eta_S)-\ldots-\delta_m(\eta_S)),
 \]
 where the sections $\delta_i$ are identified with their composition with the projection $\C_S\to \C$.

In Theorem~\ref{thm:map} we give numerical conditions to the existence of a map $\ol\alpha_\L\col S\to \Jb$ extending $\alpha_\L$. In fact, this result holds for curves with any number of components. \par
    
  To check that these conditions hold for a desingularization of $\C^d$, we need to understand its local geometry, that is, to understand how $\C^d$ behaves under the sequence of blowups performed. To do that, in Section~\ref{sec:des}, we give a local description of blowups along certain Weil divisors. Since this approach is local, it can be applied to curves with any number of components.\par
  Altough we only obtained a sequence of blowups for curves with two components, our techniques might be applied more generally to determine algorithmically whether or not a given sequence of blowups resolves the map $\alpha^d_\L$ for any nodal curve. This approach is similar to the one in \cite{CEP} where a script to determine the existence of the degree-$2$ Abel map was produced.

\subsection{Notation and terminology}
Throughout the paper we will use the following notations.\par

We work over an algebraically closed field $K$. A \emph{curve} is a connected, projective and reduced scheme of dimension 1 over $K$. We will always consider curves with  nodal singularities. A \emph{pointed curve} is a curve $C$ with a marked point $P$ in the smooth locus of $C$, usually denoted by $(C,P)$.

Let $C$ be a curve. We denote the irreducible components of $C$ by $C_1,\ldots, C_p$ and by $C^{sing}$ the set of its nodes. A \emph{subcurve} of $C$ is a union of irreducible components of $C$. If $Y$ is a proper subcurve of $C$, we let $Y^c:=\overline{C\setminus Y}$ and call it the \emph{complement} of $Y$. We denote $\Sigma_Y:=Y\cap Y^c$ and $k_Y:=\#\Sigma_Y$; a node in $\Sigma_Y$ is called an \emph{extremal} node of $Y$. A node $N$ of $C$ is \emph{external} if $N\in\Sigma_Y$ for some subcurve $Y$, otherwise the node is called \emph{internal}. We will always consider curves without internal nodes.

Given a map of curves $\phi\col C'\to C$ we say that an irreducible component of $C'$ is \emph{$\phi$-exceptional} if it is a smooth rational curve and is contracted by the map. A \emph{chain of rational curves of lenght $d$} is a curve which is the union of smooth rational curves $E_1,\ldots, E_d$ such that $E_i\cap E_j$ is empty if $|i-j|>1$ and $\#(E_i\cap E_{i+1})=1$. A \emph{chain of $\phi$-exceptional components} is a chain of $\phi$-exceptional curves.
 We define the curve $C(d)$ as the curve endowed with a map $\phi\col C(d)\to C$ such that $\phi$ is an isomorphism over the smooth locus of $C$, and the preimage of each node of $C$ consists of a chain of $\phi$-exceptional components of length $d$. If $(C,P)$ is a pointed curve we abuse notation denoting by $P$ its preimage in $C(d)$ so that $(C(d),P)$ is also a pointed curve.

A \emph{family of curves} is a proper and flat morphism $\pi\col\mathcal C\ra B$ whose fibers are curves. If $b\in B$, we denote $\C_b:=\pi^{-1}(b)$ its fiber. The family $\pi\col\C\to B$ is called \emph{local} if $B=\Spec(K[[t]])$,  \emph{regular} if $\C$ is regular and \emph{pointed} if it is endowed with a section $\sigma\col B\to \C$ through the smooth locus of $\pi$. A \emph{smoothing} of a curve $C$ is a regular local family $\pi\col\C\to B$ with special fiber $C$. Given a pointed smoothing $\pi\col\C\to B$ of a curve $C$ with section $\sigma\col B\to\C$, we define $P:=\sigma(0)$. If $f\:\C\ra B$ is a family of curves, we denote by $\C^d$ the product of $d$ copies of $\C$ over $B$.\par

Let $I$ be a coherent sheaf on a curve $C$. We say that $I$ is \emph{torsion-free} if its associated points are generic points of $C$. We say that $I$ is of \emph{rank 1} if $I$ is invertible on a dense open subset of $C$.  Each invertible sheaf on $C$ is a rank-$1$ torsion-free sheaf.  If $I$ is a rank-$1$ torsion-free sheaf, we call $\deg(I) := \chi(I )-\chi(\O_C)$ the \emph{degree}  of $I$. An invertible sheaf $I$ over $\phi\col C(d)\to C$ is \emph{$\phi$-admissible} if $\deg(I|_E)\in\{-1,0,1\}$ for every chain of $\phi$-exceptional components $E$.

We fix $B:=\Spec(K[[t]])$, $S:=\Spec(K[[u_1,\ldots, u_{d+1}]])$ and the map $S\to B$ given by $t=u_1\cdot u_2\cdot\ldots\cdot u_{d+1}$. We will call the closed point of both $B$ and $S$ by $0$, when no confusion may arise. Moreover, we denote by $Q_i$ the generic point of $V(u_i)$ in $S$. Given a smoothing $\pi:\C\to B$ of a curve $C$, define $\C_S:=\C\times_B S$ and let $\pi_S\col\C_S\to S$ be the induced map.

\section{Jacobians and Abel maps}
\label{sec:Jac}
  Let $\pi\col\C\to B$ be a pointed regular local family of nodal curves with section $\sigma\col B\to\C$. The \emph{degree-$e$ Jacobian} of $\pi$ is the scheme parametrizing the equivalence classes of degree-$e$ invertible sheaves on the fibers. In general, this scheme is neither proper nor of finite type. To solve these issues we resort to rank-$1$ torsion-free sheaves and to stability conditions.\par
  Let $C$ be a nodal curve with $p$ irreducible components $C_1,\ldots,C_p$ and $P$ be a smooth point of $C$. A \emph{polarization of degree $e$ on $C$} is any $p$-tuple of rational numbers $\ul{e}=(e_1,\ldots,e_p)$ summing up to $e$.  Let $Y$ be a proper subcurve of $C$. We set 
  \[
  e_Y:=\sum_{C_i\subset Y} e_i.
  \]
   Let $I$ be a rank-$1$ degree-$e$ torsion-free sheaf on $C$. We define the sheaf $I_Y$ as the sheaf $I|_Y$ modulo torsion. We say that $I$ is \emph{$P$-quasistable over $Y$ (with respect to $\ul{e}$)} if the following condition holds
\begin{eqnarray*}
\frac{-k_Y}{2}< \deg(I_Y)-e_Y \leq \frac{k_Y}{2}, & \text{if}& P\in Y,\\
\frac{-k_Y}{2}\leq \deg(I_Y)-e_Y < \frac{k_Y}{2}, & \text{if}& P\notin Y.
\end{eqnarray*}
Equivalently, $I$ is $P$-quasistable over $Y$ if the following conditions hold
\begin{eqnarray*}
\frac{-k_Y}{2}< \deg(I_Y)-e_Y \quad\text{and}\quad \frac{-k_Y}{2}\leq \deg(I_{Y^c})-e_{Y^c}& \text{if}& P\in Y, \\
\frac{-k_Y}{2}\leq \deg(I_Y)-e_Y \quad\text{and}\quad \frac{-k_Y}{2}< \deg(I_{Y^c})-e_{Y^c}& \text{if}& P\notin Y.
\end{eqnarray*}
Note that $I$ is $P$-quasistable over $Y$ if and only if it is over $Y^c$. \par
 
 We say that $I$ is \emph{$P$-quasistable} over $C$ if it is $P$-quasistable over every proper subcurve of $C$. Since the conditions are additive on connected components it is enough to check them over connected subcurves. In fact, it is easy to see that it suffices to check on connected subcurves with connected complement.\par
 Given the map of curves $\phi\col C(d)\to C$ and a polarization $\ul{e}$ over $C$, we define the polarization $\ul{e}(d)$ over $C(d)$ simply by $e(d)_Y=e_{\phi(Y)}$ if $\phi(Y)$ is not a point and $e(d)_Y=0$ otherwise, where $Y$ is a irreducible component of $C(d)$. From now on fix a polarization $\ul{e}$ of degree $e$ on $C$, and its induced polarizations $\ul{e}(d)$.\par
  Let $\pi\col\C\to B$ be a pointed regular local family of nodal curves with section $\sigma\col B\to \C$. We say that a sheaf $\I$ over $\C$ is \emph{$\sigma$-quasistable} if it restricts to a torsion-free rank-$1$ sheaf over each fiber of $\pi$ and if its restriction to the special fiber $C$ of $\pi$ is $\sigma(0)$-quasistable. The \emph{degree-$e$ compactified Jacobian of $\pi$} is the scheme $\Jb$ parametrizing $\sigma$-quasistable sheaves over $\C$ of degree $e$. This scheme is proper and of finite type (see \cite[Thms A and B]{E01}) and it represents the contravariant functor $\mathbf{J}$ from the category of locally Noetherian $B$-schemes to sets, defined on a $B$-scheme $S$ by
\[
\mathbf{J}(S):=\{\sigma_S\text{-quasistable sheaves of degree $e$ over } \C\times_B S\stackrel{\pi_S}\lra S\}/\sim
\]
where $\sigma_S$ is the pullback of the section $\sigma$ and $\sim$ is the equivalence relation given by $I_1\sim I_2$ if and only if there exists an invertible sheaf $M$ on $S$ such that $I_1\cong I_2\otimes \pi_S^*M$. \par

\begin{Prop}
\label{prop:pushforward}
Let $(C,P)$ be a pointed nodal curve and consider $\phi\col C(d)\to C$. Let $L$ be a line bundle over $C(d)$ that is $\phi$-admissible and $P$-quasistable over each subcurve $Y$ of $C(d)$ such that  $Y$ and $Y^c$ are connected and neither is contracted by $\phi$. Then the sheaf $\phi_*(L)$ is $P$-quasistable.
\end{Prop}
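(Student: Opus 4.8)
The plan is to verify the defining inequalities of $P$-quasistability for $F:=\phi_*(L)$ one subcurve at a time, matching them against the quasistability of $L$ supplied by the hypothesis. By the reduction recorded right before the statement it suffices to treat subcurves $Z\subset C$ for which both $Z$ and $Z^c$ are connected; fix such a $Z$. I would first dispatch the structural points: $F$ is rank-$1$ torsion-free because $\phi$ is an isomorphism over a dense open set, and $\phi$-admissibility forces $R^1\phi_*L=0$ — indeed no connected sub-chain of exceptional components carries degree $\le -2$, so $L$ has vanishing $H^1$ on every sub-chain — whence $\deg F=\deg L$ and, crucially, the Euler characteristic is additive across the contracted chains. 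The heart of the proof is then to build, for the fixed $Z$, a subcurve $Y\subset C(d)$ to which the hypothesis on $L$ applies and whose numerical data agree with those of $Z$.

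I would construct $Y$ as the strict transform $\wt Z$ of $Z$ (the union of the components of $C(d)$ mapping isomorphically to components of $Z$), together with the entire exceptional chain over every node internal to $Z$, and, over every node $N\in\Sigma_Z$, an initial segment $E^N_1,\dots,E^N_{a_N}$ of the chain, where $E^N_1$ is the component meeting $\wt Z$ and the cut $0\le a_N\le d$ is to be chosen. For any choice of the $a_N$ the complement $Y^c$ arises symmetrically from $Z^c$; both $Y$ and $Y^c$ are connected (as $Z,Z^c$ are) and neither is contracted (each contains a non-exceptional component), so the hypothesis on $L$ applies to $Y$. Moreover $Y$ meets $Y^c$ in exactly one node of each chain over $\Sigma_Z$, giving $k_Y=k_Z$; exceptional components have $\ul{e}(d)$-degree $0$ and $\phi$ sends $\wt{C_i}$ to $C_i$, giving $e(d)_Y=e_Z$; and $P\in Y\iff P\in Z$ since $P$ lies in the smooth locus. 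Thus the quasistability inequality of $L$ over $Y$ will coincide verbatim — same width $k_Z$, same centering $e_Z$, same side of the strict inequality — with the desired one for $F$ over $Z$, provided I can arrange $\deg(L_Y)=\deg(F_Z)$.

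It remains to choose the cuts so that the degrees match. Since $L$ is a line bundle, $\deg(L_Y)=\deg(L|_{\wt Z})+\sum_{N\ \text{internal}}D^N_d+\sum_{N\in\Sigma_Z}D^N_{a_N}$, where $D^N_a:=\sum_{i=1}^a\deg(L|_{E^N_i})$ are the partial degree sums along each chain. Because $R^1\phi_*L=0$, the contribution of each internal chain to $\deg(F_Z)$ is exactly its full degree $D^N_d$, matching the corresponding term above with no adjustment. For a boundary node $N\in\Sigma_Z$ the computation is local: describing a section of $F$ near $N$ as a compatible system of sections of $L$ on $\wt Z$, on the chain components $\O_{\Ps^1}(\deg L|_{E^N_i})$, and on $Z^c$, I would show by a short propagation argument that the chain shifts $\deg(F_Z)$ relative to $\deg(L|_{\wt Z})$ by some $\gamma_N\in\{-1,0\}$, with $\gamma_N=0$ precisely when all partial sums $D^N_a$ are $\ge 0$. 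Now admissibility enters decisively a second time: every contiguous window sum lies in $\{-1,0,1\}$, so all partial sums $D^N_a$ differ pairwise by at most $1$ and hence — together with $D^N_0=0$ — take values in $\{-1,0\}$ or in $\{0,1\}$. Consequently $\gamma_N$ is always realized as one of the partial sums $D^N_{a_N}$ (take $a_N=0$ when $\gamma_N=0$, and any $a_N$ with $D^N_{a_N}=-1$ when $\gamma_N=-1$). Choosing the cuts this way yields $\deg(L_Y)=\deg(F_Z)$.

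With the degrees matched, the $P$-quasistability inequalities for $L$ over $Y$ are literally those for $F$ over $Z$; letting $Z$ range over all connected subcurves with connected complement proves that $\phi_*L$ is $P$-quasistable. I expect the local computation of the boundary shift $\gamma_N$ to be the main obstacle: one must verify both that the chain never depresses the degree by more than one and that the depression is recorded by a partial sum of the multidegree. This is exactly where $\phi$-admissibility is indispensable — a sub-chain of degree $\le -2$ would resurrect $R^1\phi_*L$, destroy the Euler-characteristic additivity used for the internal chains, and break the $\{-1,0\}$ dichotomy — so I would organize the argument to extract, at the outset, the two consequences of admissibility (vanishing $R^1$ and the pairwise-bounded partial sums) on which everything rests.
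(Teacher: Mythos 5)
Your proposal is correct, but it follows a genuinely different route from the paper's proof. The paper never analyzes $\phi_*(L)$ directly: it fixes a smoothing $\C\to B$ of $(C(d),P)$ and invokes [CEP, Propositions 5.2 and 5.3] to reduce the statement to producing a twister $\O_\C(Z)$, supported on the $\phi$-exceptional components, such that $(\L\otimes\O_\C(Z))|_{C(d)}$ is itself $P$-quasistable on $C(d)$; the divisor $Z$ is built by an iterative algorithm (repeatedly adding the maximal subchains on which the current bundle has degree $1$, with admissibility preserved at each step and strict decrease of the subchains guaranteeing termination), and quasistability of the twisted bundle over a subcurve $Y\subset C(d)$ is then checked by comparing $Y$ with an auxiliary subcurve $Y'$ to which the hypothesis applies. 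You instead work entirely downstairs on $C$: you compute the multidegree of $\phi_*(L)$ itself, using $R^1\phi_*L=0$ and the boundary shifts $\gamma_N\in\{-1,0\}$ detected by the partial sums $D^N_a$, and for each subcurve $Z\subset C$ you manufacture a subcurve $Y\subset C(d)$ (strict transform, full internal chains, initial segments cut at $a_N$ with $D^N_{a_N}=\gamma_N$) whose numerical data $k_Y$, $e(d)_Y$, $\deg(L_Y)$ and position of $P$ match those of $Z$ verbatim, so the hypothesis transfers the inequality directly. I checked your two key claims --- the dichotomy that all partial sums lie in $\{-1,0\}$ or in $\{0,1\}$, and the characterization of $\gamma_N$ as $-1$ exactly when some partial sum is negative --- and both are true consequences of admissibility (the second follows from the propagation of section values along the chain: a value is forced to vanish precisely when a degree $-1$ component is reached through degree-$0$ components before any degree-$1$ ``absorber''), so the cut $a_N$ always exists and the degrees match. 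What the paper's route buys is that all sheaf-theoretic content about pushforwards is outsourced to the cited CEP results and to the twister formalism used throughout the rest of the paper; what your route buys is self-containedness (no smoothing, no external citation) and an explicit multidegree formula for $\phi_*(L)$, at the price of having to write out in full the local propagation lemma, which you only sketch but which is standard and does go through.
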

\begin{proof}
Fix $\C\to B$ a smoothing of $(C(d),P)$, let $\L$ be a line bundle on $\C$ such that $\L|_{C(d)}=L$. By \cite[Propositions 5.2 and 5.3]{CEP} it suffices to show that exists a twister $\O_{\C}(Z)$, with $Z$ is a divisor supported on the exceptional components of $\phi$, such that $(\L\otimes \O_{\C}(Z))|_{C(d)}$ is $P$-quasistable.\par
    The divisor $Z$ is effective and can be algorithmically computed as follows. Recall that $\L$ is admissible if and only if its degree on each chain of $\phi$-exceptional components is $-1$, $0$ or $1$. We define invertible sheafs $\L_i$ inductively. Set $\L_0:=\L$. For a maximal chain of $\phi$-exceptional components $E$ over some node of $C$, let $W_{E,i}$ be the (possibly empty) maximal subchain of $E$ such that $\deg(\L_{i-1}|_{W_{E,i}})=1$. Define 
   \[
   Z_i:=\bigcup_{E} W_{E,i}
   \]
and $\L_i:=\L_{i-1}(Z_i)$.\par

  We claim that $\L_i$ is admissible and that $Z_{i+1}$ is empty or strictly contained in $Z_i$. Indeed, fix a maximal chain $E=E_1\cup\ldots\cup E_d$ and let $W_{E,i}=E_\ell\cup\ldots\cup E_h$. We have that $\deg(\L_{i-1}|_{W_{E,i}})=1$ and the maximality of $W_{E,i}$ implies that either $\ell=1$ or
\[
\deg(\L_{i-1}|_{E_{\ell-1}})=-1\quad\text{and} \quad\deg(\L_{i-1}|_{E_k})=0,\quad\text{for every}\quad1\leq k\leq \ell-2;
\]
also either $h=d$ or 
\[
\deg(\L_{i-1}|_{E_{h+1}})=-1\quad\text{and}\quad\deg(\L_{i-1}|_{E_k})=0,\quad\text{for every}\quad h+2\leq k\leq d.
\]
 This implies that either $\ell=1$ or 
\[
\deg(\L_{i}|_{E_k})=0,\quad\text{for every}\quad1\leq k\leq \ell-1;
\]
also either $h=d$ or
\[
\deg(\L_{i}|_{E_k})=0,\quad\text{for every}\quad h+1\leq k\leq d.
\]
Therefore, we see that $\L_i$ is admissible.\par

Moreover we have
\[
\deg(\L_{i}|_{W_{E,i}})=-1,
\]
meaning that there exists $\ell'$ and $h'$ such that 
\[
\deg(\L_{i}|_{E_{\ell'}})=-1\quad\text{and} \quad\deg(\L_{i}|_{E_k})=0,\quad\text{for every}\quad \ell\leq k\leq \ell'-1
\]
and
\[
\deg(\L_{i}|_{E_{h'}})=-1\quad\text{and} \quad\deg(\L_{i}|_{E_k})=0,\quad\text{for every}\quad h'+1\leq k\leq h.
\]
Clearly $W_{E,i+1}=E_{\ell'+1}\cup\ldots\cup E_{h'-1}$ (which may be empty if $\ell'=h'$), and then $W_{E,i+1}$ is strictly contained in $W_{E,i}$. This concludes the proof of the claim.\par
  Define 
\[
Z:=\sum_{i\geq1} Z_i,
\]
Now it is enough to prove that $\N:=\L\otimes \O_\C(Z)$ restricted to $C(d)$ is $P$-quasistable. Let $Y$ be a connected subcurve of $C(d)$ with connected complement. If $Y$ is contracted by the map $\phi$, then $Y$ is a chain of exceptional components, hence, since $\N$ is admissible and there is no chain of exceptional components over which $\N$ has degree $1$, it follows that $\deg(\N|_Y)\in\{-1,0\}$. This proves that $\N$ is $P$-quasistable over $Y$ and $Y^c$.\par
 Now assume that neither $Y$ nor $Y^c$ is contracted by $\phi$. For every $N\in\Sigma_{\phi(Y)}$, we define
 \[
 Y^\circ:=\ol{Y\setminus \bigcup_{N\in \Sigma_{\phi(Y)}}\phi^{-1}(N)},\quad N^\circ:=\phi^{-1}(N)\cap Y^\circ,
 \]
 \[
  E_N:=\ol{(\phi^{-1}(N)\setminus\{N^\circ\})\cap Y}\quad\text{and}\quad E_Y:=\bigcup_{N\in\Sigma_{\phi(Y)}} E_N. 
 \]
Note that $Y^\circ=\ol{Y\setminus E_Y}$, and hence
\[
\deg(\N|_Y)=\deg(\N|_{Y^\circ})+\deg(\N|_{E_Y}).
\]
Moreover, we have
\[
\deg(\N|_{Y^\circ})=\deg(\L|_{Y^\circ})+\sum_{N\in\Sigma_{\phi(Y)}}\epsilon_N,
\]
where $\epsilon_N$ is $1$ if $N^\circ\in Z$ and $0$ otherwise. Note that if $\epsilon_N=0$ then either there exists a chain of exceptional components $E_N'$ such that $\deg(\L|_{E_N'})=-1$ and $E_N'\cap Y^\circ \neq\emptyset$ or the degree of $\L$ over every chain of exceptions components contained in $\phi^{-1}(N)$ is zero, and in this case define $E_N'=\emptyset$. Define 
\[
Y':=Y^\circ\cup\underset{\epsilon_N=0}{\bigcup_{N\in \Sigma_{\phi(Y)}}} E_N', 
\]
then
\begin{eqnarray*}
\deg(\N|_{Y})&=&\deg(\L|_{Y^\circ})+\sum_{N\in\Sigma_{\phi(Y)}}\epsilon_N+\deg(\N|_{E_Y})\\
              &=&\deg(\L|_{Y^\circ})+\sum_{\epsilon_N=0}\deg(\N|_{E_N})+ \sum_{\epsilon_N=1}(\epsilon_N+\deg(\N|_{E_N}))\\
             &\geq&\deg(\L|_{Y'}),
\end{eqnarray*}
implying that
\[
\deg(\N|_Y)-e(d)_Y \geq \deg(\L|_{Y'})-e(d)_{Y'}.
\]
We can repeat the same process for $Y^c$, obtaining a subcurve ${Y^c}'$ satisfying
\[
\deg(\N|_{Y^c})-e(d)_{Y^c}\geq\deg(\L|_{{Y^c}'})-e(d)_{{Y^c}'}.
\]\par
 Since both $Y'$ and ${Y^c}'$ are connected with connected complement and are not contracted by $\phi$, it follows that $\L$ is $P$-quasistable over $Y'$ and ${Y^c}'$, and therefore $\N$ is $P$-quasistable over $Y$. The proof is complete.\end{proof}
   
   Let $\pi\col\C\to B$ be a pointed regular local family of nodal curves with section $\sigma\col B\to\C$. Let $C$ be the special fiber of $\pi$ with irreducible components $C_1,\ldots, C_p$. We define $\dot\C$ as the smooth locus of $\pi$ and $\dot C_i:=C_i\cap \dot\C$. Set $\dot\C^d:=\dot\C\times_B\dot\C\times_B\ldots\times_B\dot\C$, the product of $d$ copies of $\dot\C$ over $B$. Note that the special fiber of $\dot\C^d\to B$ is
\[
\coprod_{1\leq i_1,\ldots,i_d\leq p} \dot C_{i_1}\times\ldots\times \dot C_{i_d}.
\]
For each $d$-tuple $\underline{i}=(i_1,\ldots,i_d)$ define $\dot C_{\underline{i}}:=\dot C_{i_1}\times\ldots\times \dot C_{i_d}$. 
Let $\L$ be a degree-$e$ invertible sheaf over $\C$. There exists a degree-$d$ Abel map from $\dot\C^d$ to the degree-$e$ Jacobian of $\pi$ simply sending the $d$-tuple $(Q_1,\ldots,Q_d)$ over $b$ to the invertible sheaf 
\begin{equation}
\label{eq:l}
\L|_{\C_b}(d\cdot\sigma(b)-Q_1-\ldots-Q_d).
\end{equation}
We want to extend this Abel map to $\C^d$, and it is convenient to consider the degree-$e$ compactified Jacobian $\Jb$ as target. However, the sheaf \eqref{eq:l} may not be $\sigma(b)$-quasistable and thus we do not even have a map from $\dot\C^d$ to $\Jb$ defined as above. To solve this we use twisters and the fact that $\Jb$ represents the functor $\mathbf{J}$.\par

Indeed, form the fiber diagram
  \[
\begin{CD}
\dot\C^d\times_B\C @>f>> \C \\
@V\pi_dVV   @VV\pi V\\
\dot\C^d @>>> B
\end{CD}
\]\smallskip
By \cite[Thm 32, (4)]{E01}, for each $\underline{i}$ there exists a divisor
\begin{equation}
\label{eq:z}
Z_{\underline{i}}=\sum_{j=1}^p\ell_{\underline{i},j}\cdot \dot C_{\underline{i}}\times C_j
\end{equation}
of $\dot\C^d\times_B\C$ such that the invertible sheaf $\M$ defined as
\[
\M:=f^*\L\otimes\O_{\dot\C^d\times_B\C}\left(d\cdot f^*\sigma(B)- \sum_{i=1}^d\Delta_{i,d+1}\right) \otimes\O_{\dot\C^d\times_B\C} \left(-\sum_{\underline{i}}Z_{\underline{i}}\right)
\]
is $f^*\sigma$-quasistable, where $\Delta_{i,d+1}$ is the preimage of the diagonal via the projection map $\dot\C^d\times_B\C\to \C\times_B\C$ onto the $i$-th and $d+1$-th factor. This $f^*\sigma$-quasistable sheaf $\M$ induces the Abel map
\[
\alpha_\L^d\col\dot\C^d\lra\Jb.
\]
 In this paper we give conditions to determine when this map extends to a suitable desingularization of $\C^d$.

\section{Desingularizations}
\label{sec:des}

 Given a smoothing $\pi\col\C\to B$ of a curve $C$ and $N$ a node of $C$, we can write the completion of the local ring of $\C$ at $N$ as
\[
\wh{\O}_{\C,N}\simeq K[[x,y]].
\]
The map $\pi\col\C\to B$ is, locally around $N$, given by $xy=t$. In this section we study the geometry of this local map and its formation with base change. In Figure 1 we collect all the relevant results in an explicit example.\par
 Recall that we defined $S=\Spec(K[[u_1,\ldots,u_{d+1}]])$ and a map $S\to B$ given by $t=u_1\cdot \ldots\cdot u_{d+1}$. Define $T:=\Spec(K[[x,y]])$ and the map $T\to B$ given by $t=xy$. Let $T_S:=T\times_B S$. Clearly, we have
 \[
 T_S=\Spec\left(\frac{K[[u_1,\ldots, u_{d+1},x,y]]}{(xy-u_1\cdots u_{d+1})}\right).
 \]
Given a subset $A$ of $\{1,\ldots,d+1\}$, we define
\[
u_A:=\prod_{j\in A} u_j.
\]

To desingularize $T_S$, we will blowup Weil divisors of type $D_A:=V(x,u_A)$, where $A$ is a proper nonempty subset of $\{1,\ldots,d+1\}$. More precisely, given a collection of proper nonempty subsets  $\A:=(A_1,\ldots,A_k)$ of $\{1,\ldots,d+1\}$,  we will perform a sequence of blowups 
\begin{equation}
\label{eq:blowup}
\phi\col\wt{T}_{S}^{\A}:=\wt{T}_{S}^{k}\stackrel{\phi_k}{\lra} \wt{T}_{S}^{k-1}\stackrel{\phi_{k-1}}{\lra}\cdots\stackrel{\phi_2}{\lra} \wt{T}_{S}^{1}\stackrel{\phi_1}{\lra} \wt{T}_{S}^{0}:=T_S,
\end{equation}
where the map $\phi_i$ is the blowup of the strict transform $\wt{D}_{A_i}$ of $D_{A_i}$ via the composition map $\phi_1\circ\ldots\circ\phi_{i-1}$.

\begin{Rem}
\label{rem:diagonal}
Note that the local equations of the blowup of $T_S$ along $D_A$ are given by $\alpha x-\alpha'u_A=0$ and $\alpha'y-\alpha u_{A^c}=0$ where $(\alpha:\alpha')$ are coordinates of $\mathbb{P}^1$. It is easy to see that if we blow up $V(y,u_{A^c})$ we will obtain the same equations. Therefore, blowing up $V(y,u_{A^c})$ is equivalent to blowing up $D_A$.\par
  The same property holds for the blowup along $V(x-u_{A^c},y-u_A)$. Indeed the local equation of such blowup is
  \begin{equation}\label{eq:uAuAc}
  \alpha(x-u_{A^c})-\alpha'(y-u_A)=0.
  \end{equation}
  Nevertheless, we know that the relation $xy=u_Au_{A^c}$ holds, and this relation is equivalent to $x(y-u_A)=u_A(u_{A^c}-x)$. Hence, we can simplify the equation \eqref{eq:uAuAc}  to the equations
  \[
  \alpha x+\alpha'u_A=0\quad\text{and}\quad \alpha y+\alpha' u_{A^c}=0,
 \]
 which, up to sign, are the same equations for the blowup along $D_A$. 
 This justifies why in the sequel we only consider blowups along divisors of type $D_A$.
 % Therefore we can restrict ourselves to study only blowups along $D_A$.
\end{Rem}

Let $\A=(A_1,\ldots, A_k)$ be a collection of subsets of $\{1,\ldots, d+1\}$ and $A$ be a subset of $\{1,\ldots,d+1\}$. Assume that $\wt{T}_S^{\A}$ is obtained by a sequence of blowups of $T_S$ as in \eqref{eq:blowup}.  Also, let $S_A$ be the complement of $V(u_A)$ in $S$. We have $S_A=\Spec(K[[u_1,\ldots,u_{d+1}]]_{u_A})$. Define $T_{S_A}:=T\times_B {S_A}$ and $\wt{T}_{S_A}^\A:=\wt{T}_S^\A\times_S S_A$. We have the fiber diagram
\[
\begin{CD}
\wt{T}_{S_A}^{\A} @>>> \wt{T}_S^{\A} \\
@VVV   @VVV\\
T_{S_A} @>>> T_S\\
@VVV @VVV\\
S_A @>>> S
\end{CD}
\]

We call a collection $\A:=(A_1,\ldots,A_k)$ of subsets of a finite set $F$ a \emph{smooth collection for $F$} if for every distinct $i,j\in F$,  there exists $\l$ such that either $j\in A_\l$ and $i\notin A_\l$, or $i\in A_\l$ and $j\notin A_\l$.

\begin{Prop}
\label{prop:smooth}
The scheme $\wt{T}_{S}^{\A}$ is smooth if and only if $\A$ is a smooth collection for $\{1,\ldots,d+1\}$. Moreover, in that case, the inverse image of the closed point of $T_S$ in $\wt{T}_S^{\A}$ is a chain of $d$ rational curves.
\end{Prop}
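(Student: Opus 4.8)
The plan is to argue by induction on $\#\{1,\ldots,d+1\}=d+1$, peeling off the \emph{first} blowup and using Remark~\ref{rem:diagonal} to produce charts. Write $F=\{1,\ldots,d+1\}$. In the chart $\alpha=1$ of the blowup of $T_S$ along $D_{A_1}=V(x,u_{A_1})$ one has $x=s\,u_{A_1}$ and $s\,y=u_{A_1^c}$, so this chart is $\Spec$ of $K[[\ldots]]/(sy-u_{A_1^c})$, which is the \emph{same kind of scheme attached to the index set $A_1^c$} (with $s$ in the role of $x$ and $y$ in the role of $y$) times the affine space $\mathbb A^{A_1}$ on the free coordinates $\{u_j:j\in A_1\}$; symmetrically, the chart $\alpha'=1$ is the scheme attached to $A_1$ (with $r=\alpha$ in the role of $y$) times $\mathbb A^{A_1^c}$. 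Thus the single blowup $D_{A_1}$ splits the problem for $F$ into the two strictly smaller problems for $A_1$ and for $A_1^c$.

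Next I would check that the remaining blowups restrict correctly. The key computation is that on the chart attached to $A_1^c$ the exceptional divisor of the first blowup is supported on $\bigcup_{j\in A_1}V(u_j)$, so on the strict transform of $D_{A_l}=V(x,u_{A_l})$ the factor $u_{A_l\cap A_1}$ is invertible; using $x=s\,u_{A_1}$ one finds this strict transform equals $V(s,u_{A_l\cap A_1^c})$, i.e. exactly the divisor of the sub-problem for $A_1^c$ indexed by $A_l\cap A_1^c$. Dually, on the chart attached to $A_1$ one uses the alternative presentation $D_{A_l}=V(y,u_{A_l^c})$ from Remark~\ref{rem:diagonal}, and $y=r\,u_{A_1^c}$ gives the strict transform $V(r,u_{A_l^c\cap A_1})$, the divisor of the sub-problem for $A_1$ indexed by $A_l\cap A_1$. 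When $A_l\cap A_1^c$ (resp. $A_l\cap A_1$) is empty or the whole block, the center is empty or Cartier and the blowup is an isomorphism, contributing nothing. Hence $\wt{T}_S^{\A}$ is covered by two charts, each of the form (resolution of a sub-problem)$\,\times\,$(affine space), and smoothness of $\wt{T}_S^{\A}$ is equivalent to smoothness of both sub-resolutions.

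The induction then closes with the matching combinatorial fact: $\A$ is a smooth collection for $F$ if and only if $\{A_l\cap A_1\}_{l\ge2}$ is a smooth collection for $A_1$ and $\{A_l\cap A_1^c\}_{l\ge2}$ is one for $A_1^c$. This is elementary, since a pair inside one block is separated by some $A_l$ exactly when its intersection with that block separates it, while a pair split across the blocks is separated by $A_1$ itself. The base case is a singleton, where $T_S\cong\mathbb A^2$ is smooth, the empty collection is vacuously a smooth collection, and the central fiber is a point (a chain of length $0$). For the central fiber in the inductive step, the preimage of the closed point is the exceptional $\Ps^1$ of the first blowup, to whose two ends (the loci $s=0$ and $r=0$) the central fibers of the two sub-resolutions are attached; by induction these are chains of $\#A_1-1$ and $\#A_1^c-1$ rational curves, so the total is a chain of $(\#A_1-1)+1+(\#A_1^c-1)=d$ rational curves.

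I expect the main obstacle to be the second step: verifying uniformly that the strict transform of each $D_{A_l}$ reduces to the correct sub-problem divisor in all cases, in particular for sets $A_l$ straddling $A_1$ and $A_1^c$ and for the degenerate (improper) intersections, and then confirming that the two inductively produced sub-chains attach to the two \emph{distinct} ends of the exceptional $\Ps^1$ so as to form a single chain. The converse implication (not a smooth collection $\Rightarrow$ singular) follows from the same splitting: an unseparated pair persists inside one block and, at the bottom of the induction, yields a factor $\{sy=u_iu_j\}$, an ordinary double point, forcing singularity.
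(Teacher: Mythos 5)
Your proposal follows essentially the same route as the paper's proof: peel off the first blowup, identify its two charts with the sub-problems for $A_1$ and $A_1^c$ (times an affine space in the remaining $u_j$'s), check that the strict transforms of the later centers become the sub-problem centers up to empty or Cartier pieces, close the induction with the combinatorial splitting of smooth collections, and count $(\#A_1-1)+1+(\#A_1^c-1)=d$ curves for the chain. The only cosmetic differences are that the paper computes the strict transforms directly from $V(x,u_{A_l})$ in both charts instead of invoking the dual presentation $V(y,u_{A_l^c})$ of Remark~\ref{rem:diagonal}, and its ``only if'' direction is phrased as a localization at $S_{\{i,j\}^c}$ (where the equation becomes $xy=u_iu_j$ and some center must restrict to a non-Cartier divisor) rather than as the bottom case of your induction --- the same underlying fact.
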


\begin{proof} First assume that $\wt{T}_S^{\A}$ is smooth. Consider the open subscheme $S_{\{i,j\}^c}$ with $i,j=1,\ldots, d+1$ distinct. Note that $\wt{T}_{S_{\{i,j\}^c}}$ is smooth and $T_{S_{\{i,j\}^c}}$ is not. Hence there exists one of the divisors $D_{A_\ell}=V(x,u_{A_\ell})$ such that the restriction to $T_{S_{\{i,j\}^c}}$ is not Cartier. However the equation of $T_{S_{\{i,j\}^c}}$ is $xy-u_iu_j=0$, and hence the restriction of $D_{A_\ell}$ is not Cartier only if $i\in A_\ell$ and $j\notin A_\ell$ or $i\notin A_\ell$ and $j\in A_\ell$.\par 

Assume now that $\A$ is smooth. An open covering for $\wt{T}_{S}^{1}$ is given by
\begin{equation}
\label{eq:Ulocal}
U:=\Spec\left(\frac{K[[x,y,u_1,\ldots,u_{d+1},\alpha]]}{(\alpha x-u_{A_1},y-\alpha u_{A_1^c})}\right)=\Spec\left(\frac{K[[x,u_1,\ldots,u_{d+1},\alpha]]}{(\alpha x-u_{A_1})}\right)
\end{equation}
\begin{equation}
\label{eq:Vlocal}
V:=\Spec\left(\frac{K[[x,y,u_1,\ldots,u_{d+1},\alpha']]}{(x-\alpha'u_{A_1},\alpha'y- u_{A_1^c})}\right)=\Spec\left(\frac{K[[y,u_1,\ldots,u_{d+1},\alpha']]}{(\alpha'y-u_{A_1^c})}\right)
\end{equation}

We claim that the strict transform $\wt{D}_{A_2}$ in $\wt{T}_{S}^{1}$ is given locally, up to Cartier divisors, by 
\[
V\left(x,u_{A_1\cap A_2}\right)\subset U \quad\quad\text{and}\quad\quad V\left(\alpha',u_{A_1^c\cap A_2}\right)\subset V.
\]
To see this, just note that 
\[
(x,u_{A_2})=\bigcap_{j\in A_2} (x,u_j).
\]
Hence we need only analyze the strict transforms $\wt{V}(x,u_j)$ of $V(x,u_j)$. Since the strict transform is contained in the inverse image, we get
\[
(x,u_j)\subset I(\wt{V}(x,u_j)\cap U).
\]
(Here, note that we are using the same notation for the coordinates in both $U$ and $T_S$.)
Therefore, by Equation~\ref{eq:Ulocal}, we readily see that $(x,u_j)$ has codimension $1$ in $U$ if and only if $j\in A_1$, which implies that $\wt{V}(x,u_j)$ is empty if $j\in A_1^c$. 
Arguing similarly for $V$  we see that
\[
(\alpha'u_{A_1},u_j)\subset I(\wt{V}(x,u_j)\cap V).
\]
Therefore, if $j\in A_1$, using Equation~\ref{eq:Vlocal} we get that $I(\wt{V}(x,u_j))=(u_j)$ in $V$, and hence $\wt{V}(x,u_j)$ is Cartier in $V$. Otherwise if $j\in A_1^c$, then we have
\[
(\alpha'u_{A_1},u_j)=(\alpha',u_j) \cap\bigcap_{i\in A_1} (u_i,u_j).
\]
Since $(u_i,u_j)$ has codimension $2$ in $V$, we conclude that 
\[
I(\wt{V}(x,u_j)\cap V)=(\alpha',u_j).
\]\par

 To sum up: The strict transform $\wt{V}(x,u_j)$ of $V(x,u_j)$ has empty intersection with $U$ (respectively is a Cartier divisor in $V$) if $j\in A_1^c$ (respectively if $j\in A_1$). Otherwise if $j\in A_1$ (respectively if $j\in A_1^c$), then this intersection is given by $(x,u_j)$ in $U$ (respectively, by $(\alpha',u_j)$ in $V$). The proof of the claim is complete.\par

We proceed now by induction on $d$. First, we split Sequence~\eqref{eq:blowup} in two, using the open covering $\wt{T}_{S}^{1}=U\cup V$. Define 
\[
U_\ell:=(\phi_2\circ\cdots\circ\phi_\ell)^{-1}(U)\quad\text{and}\quad V_\ell:=(\phi_2\circ\cdots\circ\phi_\ell)^{-1}(V)
\]
and the sequence
\[
U_k\stackrel{\phi_k}{\lra} U_{k-1}\stackrel{\phi_{k-1}}{\lra}\cdots\stackrel{\phi_3}{\lra} U_{2}\stackrel{\phi_2}{\lra} U,
\]
where the map $\phi_\ell$ is the blowup along the intersection $\wt{D}_{A_\ell}\cap U_\ell$. By the claim above, the strict transforms of $D_{A_2},D_{A_3},\ldots,D_{A_k}$ via the map $U\to T_{S}$ are given by equations
\[
(x,u_{A_1\cap A_2}),(x,u_{A_1\cap A_3}),\ldots,(x,u_{A_1\cap A_k}).
\]
Now, we just observe that the collection 
\[
\A_U:=(A_1\cap A_2,A_1\cap A_3,\ldots, A_1\cap A_k)
\]
is a smooth collection for $A_1$. Since $|A_1|\leq d$, by induction hypothesis $U_k$ is smooth.\par

For $V$ the argument is similar, just note that Cartier divisors may appear as components of the strict transforms, but they do not give contributions to the blowups. This proves the smoothness.\par

To prove the second statement, we still proceed by induction on $d$. By induction hypothesis, the inverse image of the closed point in $U$ via the map $U_k\to U$ is a chain of $|A_1|-1$ rational curves, and the inverse image of the closed point in $V$ via the map $V_k\to V$ is a chain of $|A_1^c|-1$ rational curves. Since the blowup of $V(x,u_{A_1})$ adds exactly one rational curve, we get the result.
\end{proof}

%We get then the obvious corollary:

\begin{Cor}
\label{cor:chain}
Let $\A$ be a smooth collection for $\{1,\ldots,d+1\}$. Then, the inverse image of the closed point in $T_{S_A}$ via the map $\wt{T}_{S_A}^\A\to T_{S_A}$ is a chain of $d-|A|$ rational curves.
\end{Cor}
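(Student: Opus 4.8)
The plan is to deduce this corollary from Proposition~\ref{prop:smooth} by base change to the open set $S_A$, where $u_A$ becomes invertible. First I would observe that inverting $u_A$ partially trivializes the node: on $T_{S_A}$ the defining relation $xy=u_1\cdots u_{d+1}$ can be rewritten as $x=u_A^{-1}u_{A^c}\,y$ (since $u_A$ is a unit), so the variable $x$ is expressed in terms of $y$ and the $u_i$. In other words, $T_{S_A}$ is already smooth, and the closed fiber over the closed point of $S$ is cut out by $y=0$ together with $u_1=\cdots=u_{d+1}=0$ — but these equations are now redundant with one another through the relation. I expect that $T_{S_A}$ looks like $\Spec$ of a regular ring in which the indices in $A$ have been ``used up'' to resolve the singularity.

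Next I would trace the chain of blowups $\wt{T}_S^\A\to T_S$ through the base change to $S_A$, using the fiber diagram already displayed before the statement of the smooth-collection definition. The key point is that blowing up $D_{A_i}=V(x,u_{A_i})$ has no effect after restricting to $S_{A}$ whenever $A_i$ contains $A$ (or more precisely whenever $u_{A_i}$ becomes a unit times a power already accounted for), because such a center becomes Cartier once $u_A$ is inverted; only the blowups whose centers genuinely meet the closed point of $T_{S_A}$ contribute exceptional curves. Concretely, I would use the local computation from the proof of Proposition~\ref{prop:smooth}, where the strict transform of $V(x,u_j)$ is shown to be empty, Cartier, or a genuine codimension-$2$ center depending on membership in $A_1$; the same dichotomy, applied after inverting $u_A$, kills exactly the exceptional curves indexed by the elements of $A$.

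The cleanest way to organize the count is to run the same induction on $d$ that underlies Proposition~\ref{prop:smooth}, but now keeping track of the subset $A$. By Proposition~\ref{prop:smooth} the full inverse image of the closed point in $T_S$ is a chain of $d$ rational curves, one exceptional curve being created at each of the $d$ stages that genuinely resolves a codimension-$2$ locus. Inverting $u_A$ removes precisely those curves lying over the components $V(u_j)$ for $j\in A$: each such $j$ corresponds to one contracted branch in the chain, so exactly $|A|$ of the $d$ rational curves disappear while the remaining ones stay linked, leaving a chain of $d-|A|$ rational curves. I would verify the base case ($A$ a singleton, removing one end of the chain) directly and then propagate through the splitting $\wt T_S^1=U\cup V$ used in the previous proof.

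The main obstacle I anticipate is bookkeeping rather than conceptual: I must check that after inverting $u_A$ the surviving exceptional curves remain connected in a single chain (and do not, say, break into disjoint pieces or acquire extra intersections), and that no curve in the chain is counted twice or erroneously discarded. In particular I need to confirm that the elements of $A$ can be assumed to sit at the ``ends'' of the chain — or, if they are interior, that removing an interior node of the chain together with its adjacent segment still yields a shorter chain rather than a disconnected curve. This should follow because inverting $u_A$ commutes with the blowups and simply deletes the closed substrata $V(u_j)$ for $j\in A$ from the combinatorics of the chain; but making the connectivity claim airtight is where the real care is required.
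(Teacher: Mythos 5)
Your overall strategy --- base-change the blowup sequence to $S_A$ and reduce to Proposition~\ref{prop:smooth} --- is the right one, and it is in fact the paper's, but two of your concrete claims are false, and the step you flag as ``bookkeeping'' is exactly what those errors leave unproved. First, $T_{S_A}$ is \emph{not} smooth in general: the manipulation $x=u_A^{-1}u_{A^c}\,y$ is not a legitimate rewriting of $xy=u_Au_{A^c}$ (you would need to divide by $y$, which is not a unit). Inverting $u_A$ only absorbs a unit, leaving the local equation $xy=(\mathrm{unit})\cdot u_{A^c}$, which is still singular whenever $|A^c|\geq 2$; there remains a genuine desingularization problem over $S_A$, just in fewer variables. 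Second, your criterion for which blowups become trivial is wrong: you claim blowing up $D_{A_i}$ has no effect whenever $A_i\supseteq A$, but for $d+1=3$, $A=\{1\}$, $A_i=\{1,2\}$ the restricted center is $V(x,u_2)$, a non-Cartier Weil divisor through the singular point of $xy=(\mathrm{unit})\,u_2u_3$, whose blowup is certainly nontrivial. The correct statement is that $D_{A_i}=V(x,u_{A_i})$ restricts, up to units, to $V(x,u_{A_i\cap A^c})$: this is empty iff $A_i\subseteq A$, Cartier iff $A_i\supseteq A^c$ (since $u_{A^c}=(\mathrm{unit})\,xy$ forces $(x,u_{A^c})=(x)$), and a genuine codimension-two center otherwise.

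Once this is corrected, no new induction or chain-surgery is needed and your connectivity worry evaporates: the base-changed sequence of blowups of $T_{S_A}$ is precisely the sequence attached to the collection $(A_1\cap A^c,\ldots,A_k\cap A^c)$ (discarding empty and Cartier centers, which give trivial blowups), and this is a smooth collection for $A^c$, because any $A_\ell$ separating two elements of $A^c$ still separates them after intersecting with $A^c$. The second statement of Proposition~\ref{prop:smooth}, applied with $\{1,\ldots,d+1\}$ replaced by $A^c$, then gives at once that the inverse image of the closed point of $T_{S_A}$ is a chain of $|A^c|-1=d-|A|$ rational curves. This one-line reduction is the paper's proof (the paper writes the restricted collection as $(A_1\cap A,\ldots,A_k\cap A)$, ``a smooth collection for $A$,'' but since $S_A$ is by definition the locus where $u_A$ is invertible, the indices of the surviving singular directions form $A^c$, and the count $d-|A|=|A^c|-1$ confirms $A^c$ is the intended set). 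Your heuristic of ``deleting $|A|$ of the $d$ curves'' conflates curves with nodes --- what happens when one moves from the central fiber to the deepest point of $S_A$ is that the nodes on $\Sigma_j$, $j\in A$, are smoothed --- and, as written, it rests on the two false claims above rather than on an argument.
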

\begin{proof}
Just note that $(A_1\cap A,\ldots,A_k\cap A)$ is a smooth collection for $A$.
\end{proof}

Let $\A=(A_1,\ldots, A_k)$ be a smooth collection for a finite set $F$. We define the \emph{$\A$-ordering} of $F$ as follows: \par
Let $m,n$ be distinct elements of $F$. We say  that $m<_\A n$ if there exists $j$ such that $m\in A_j$, $n\notin A_j$ and for every $i<j$ we have that either $\{m,n\}\subset A_i$ or $\{m,n\}\subset A_i^c$. Since $\A$ is smooth, the ordering $<_\A$ is a complete ordering of $F$.\par

 Fix a smooth collection $\A$ of $\{1,\ldots,d+1\}$. Let $\wt{T}_S^\A$ be the desingularization of $T_S$ obtained via $\A$. The inverse image of the closed point in $S$ via the map $T_S\to S$ is the germ of nodal curve given by two branches $x=0$ and $y=0$. It follows from Proposition~\ref{prop:smooth} that the inverse image of the closed point of $S$ via the map $\wt{T}_S^{\A}\to S$ is the union of these two branches, but with the singular point replaced with a chain of $d$ rational curves. We denote by $N_1,\ldots, N_{d+1}$ the nodes lying on the chain, where $N_1$ is the one in $y=0$ and $N_{d+1}$ is the one in $x=0$, and by $E_1,\ldots, E_d$ the rational curves, where $\{N_i,N_{i+1}\}\subset E_i$.\par

 From now on all the strict transforms will be via the map $\phi:\wt{T}_S^\mathcal{A}\to T_S$.
 
\begin{Lem}
\label{lem:empty}
Let $\A$ be a smooth collection for $\{1,\ldots, d+1\}$. If $i<_A j$ then the strict tranforms via $\wt{T}_S^\A\to T_S$ of $V(x,u_j)$ and $V(y,u_i)$ do not intersect. 
\end{Lem}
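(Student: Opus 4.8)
The plan is to argue by induction on $d$, using the open cover $\wt{T}_S^{1}=U\cup V$ produced by the first blowup (that of $D_{A_1}$) exactly as in the proof of Proposition~\ref{prop:smooth}. The guiding principle is that if two strict transforms are already disjoint in $\wt{T}_S^{1}$, then they remain disjoint in $\wt{T}_S^{\A}$: a blowup carries the strict transform of a subscheme into the preimage of that subscheme, so disjoint subschemes stay disjoint after any further blowups, and strict transforms are compatible with composition of blowups. Thus it suffices to understand the situation after the single blowup of $D_{A_1}$ and then to recurse into one of the two charts.

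The first step is to record in which chart each strict transform lives after blowing up $D_{A_1}$. The local computation in the proof of Proposition~\ref{prop:smooth} gives, for $\wt V(x,u_j)$: if $j\in A_1$ it equals $V(x,u_j)$ in $U$ and is Cartier in $V$, while if $j\in A_1^c$ it is empty in $U$ and equals $V(\alpha',u_j)\subset\{\alpha'=0\}$ in $V$. Applying the involution $x\leftrightarrow y$, $\alpha\leftrightarrow\alpha'$, $U\leftrightarrow V$, $A_1\leftrightarrow A_1^c$ of Remark~\ref{rem:diagonal} to this computation yields the analogous statement for $\wt V(y,u_i)$: if $i\in A_1$ it equals $V(\alpha,u_i)\subset\{\alpha=0\}$ in $U$ and is empty in $V$, while if $i\in A_1^c$ it equals $V(y,u_i)$ in $V$ and is Cartier in $U$. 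On the overlap $U\cap V$ the coordinates $\alpha,\alpha'$ are invertible (with $\alpha\alpha'=1$), so the locus $\{\alpha=0\}$ meets $U$ only and $\{\alpha'=0\}$ meets $V$ only.

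With this bookkeeping the base step is immediate: if $A_1$ separates $i$ and $j$, then since $i<_\A j$ we must have $i\in A_1$ and $j\in A_1^c$, so $\wt V(y,u_i)$ sits in $\{\alpha=0\}\subset U$ while $\wt V(x,u_j)$ sits in $\{\alpha'=0\}\subset V$; these loci are disjoint, and we conclude by the first paragraph. If instead $A_1$ does not separate $i$ and $j$, then $\{i,j\}\subset A_1$ or $\{i,j\}\subset A_1^c$, and both strict transforms land in a single chart (with one of them empty in the other chart). Say $\{i,j\}\subset A_1$. Here I would identify $U=\Spec\big(K[[x,u_1,\ldots,u_{d+1},\alpha]]/(\alpha x-u_{A_1})\big)$ with a product $T_{S''}\times\Spec K[[(u_m)_{m\in A_1^c}]]$, where $T_{S''}$ is the same local model attached to the set $A_1$ (of size $|A_1|\le d$, with $x$ and $\alpha$ playing the roles of $x$ and $y$), since the variables $u_m$ with $m\in A_1^c$ do not occur in the relation $\alpha x=u_{A_1}$. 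By the claim in the proof of Proposition~\ref{prop:smooth} the remaining blowups on $U$ are along the $V(x,u_{A_1\cap A_m})$, i.e. along centers defined in the $T_{S''}$-factor, so the entire blowup sequence over $U$ is the product of the blowup sequence of $T_{S''}$ along $\A_U=(A_1\cap A_2,\ldots,A_1\cap A_k)$ with the free factor. In this model $\wt V(x,u_j)$ and $\wt V(y,u_i)$ are the divisors $V(x,u_j)$ and $V(\alpha,u_i)$ pulled back from $T_{S''}$. Because $A_1$ does not separate $i,j$, the first index of $\A$ separating them coincides with the first index of $\A_U$ separating them and keeps $i$ on the same side, whence $i<_{\A_U}j$; the induction hypothesis then gives disjointness in the $T_{S''}$-factor, hence in $U$, and disjointness in $V$ is automatic since $\wt V(y,u_i)$ is empty there. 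The case $\{i,j\}\subset A_1^c$ is identical after the involution, replacing $U,A_1,\A_U$ by $V,A_1^c,\A_V=(A_1^c\cap A_2,\ldots,A_1^c\cap A_k)$ and letting $\alpha',y$ play the roles of $x,y$.

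The step I expect to be the main obstacle is the product identification $U\cong T_{S''}\times\Spec K[[(u_m)_{m\in A_1^c}]]$ together with the verification that it is compatible with everything in sight: that the subsequent centers $V(x,u_{A_1\cap A_m})$ and the two divisors $V(x,u_j),V(\alpha,u_i)$ are all pulled back from the $T_{S''}$-factor, and that $\A_U$ is a smooth collection for $A_1$ whose $\A_U$-ordering restricts the $\A$-ordering. Once this compatibility is established the induction runs cleanly; the chart bookkeeping of the second paragraph, though routine, must also be carried out carefully, so that in the non-separating cases the ``wrong-chart'' strict transform is genuinely \emph{empty} (and not merely Cartier), ensuring that no spurious intersection appears there.
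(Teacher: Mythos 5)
Your proof is correct and takes essentially the same approach as the paper's: induction on $d$ through the two charts $U,V$ of the first blowup, using the chart-by-chart location of the strict transforms established in the proof of Proposition~\ref{prop:smooth}, with the case where $A_1$ separates $i,j$ handled by emptiness in complementary charts and the non-separating case handled by recursing into a single chart. The extra details you supply (the product structure of $U$, the compatibility $i<_{\A}j\Rightarrow i<_{\A_U}j$, and the persistence of disjointness under further blowups) are precisely the points the paper's proof leaves implicit.
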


\begin{proof}
Keep the notation of the proof of Proposition~\ref{prop:smooth}. We proceed by induction on $d$. 
We analyze first the case where $i\in A_1$ and $j\notin A_1$. It follows from the proof of Proposition~\ref{prop:smooth} that the strict transform of $V(x,u_j)$ is empty in $U$. Similarly, the strict transform of $V(y,u_i)$ is empty in $V$. Therefore there is no intersection in this case.\par

On the other hand, if $i,j\in A_1$, then the equations of the strict transforms of $V(x,u_j)$ and $V(y,u_i)$ in $U$ become $(x,u_j)$ and $(\alpha,u_i)$. By induction hypothesis the intersection of these strict transforms in $U_k$ is empty. Since the strict transform of $V(y,u_i)$ is empty in $V$, we are done also in this case. The case $i,j\in A_1^c$ is similar.
\end{proof}

 The singular locus of the map $\wt{T}_S^\A\to S$ consists of $d+1$ connected components, each one of which dominates one region of type $V(u_j)\subset S$, for some $j=1,\ldots,d+1$. Indeed, if we keep the same notation of Proposition~\ref{prop:smooth}, then the singular locus lying over $V(u_j)$ of the map $T_S\to S$  is contained in both $V(x,u_j)$ and $V(y,u_j)$, and if $i\neq j$, then either the strict transforms of $V(x,u_j)$ and $V(y,u_i)$ do not intersect, or the strict transforms of $V(x,u_i)$ and $V(y,u_j)$ do not intersect.  We will denote by $\Sigma_j$  the connected component of the singular locus of the map $\wt{T}_S^\A\to S$ that dominates $V(u_j)$. 
 
 In the sequel we will often use the following fact: If $N_i\in\Sigma_j$ then the rational curves $E_i,\ldots, E_d$ are contained in the strict transform of $V(x,u_j)$.

\begin{Prop}
\label{prop:node}
Let $\A$ be a smooth collection for $\{1,\ldots,d+1\}$. If $\eta$ is a permutation of $1,\ldots,d+1$ such that $\eta(1)<_\A\eta(2)<_\A\ldots<_\A\eta(d+1)$, then $N_i\in \Sigma_{\eta(i)}$.
\end{Prop}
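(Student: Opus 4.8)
The plan is to argue by induction on $d$, following the chart decomposition $\wt{T}^1_S=U\cup V$ already used in the proof of Proposition~\ref{prop:smooth}. The base case $d=0$ is immediate: there are no exceptional curves, the fibre over the closed point is the single node $N_1$, which lies on the unique component $\Sigma_1$ of the singular locus, and the only permutation is the identity, so $N_1\in\Sigma_{\eta(1)}=\Sigma_1$.

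For the inductive step I would first record how the first blowup, along $D_{A_1}=V(x,u_{A_1})$, splits the whole configuration. On $U$ the equation is $\alpha x=u_{A_1}$ (see \eqref{eq:Ulocal}), so $U$ is, up to the free smooth factors $u_j$ with $j\in A_1^c$, a copy of $T\times_B S'$ for $S'=\Spec K[[\{u_j\}_{j\in A_1}]]$ under the identification in which $\alpha$ plays the role of $y$; the later blowups on $U$ are governed by the smooth collection $\A_U=(A_1\cap A_2,\ldots,A_1\cap A_k)$ for $A_1$. Symmetrically, on $V$ the equation $\alpha' y=u_{A_1^c}$ (see \eqref{eq:Vlocal}) exhibits $V$ as a copy of $T\times_B S''$ for $S''=\Spec K[[\{u_j\}_{j\in A_1^c}]]$ with $\alpha'$ in the role of $x$, governed by $\A_V=(A_1^c\cap A_2,\ldots,A_1^c\cap A_k)$. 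A short computation with these equations shows that the strict transform of $y=0$ meets $U$ (as the locus $\alpha=0$) and the strict transform of $x=0$ meets $V$ (as $\alpha'=0$); hence the first exceptional divisor $E^*$ cuts the chain of $d$ rational curves into the $U$-part, containing the $y=0$ end and the nodes $N_1,\ldots,N_{|A_1|}$, and the $V$-part, containing the $x=0$ end and the nodes $N_{|A_1|+1},\ldots,N_{d+1}$, with $E^*$ joining $N_{|A_1|}$ to $N_{|A_1|+1}$. Moreover, since the fibres of $U\to S$ degenerate exactly over $V(u_{A_1})$ and those of $V\to S$ exactly over $V(u_{A_1^c})$, one has $\Sigma_m\cap U=\emptyset$ for $m\in A_1^c$ and $\Sigma_m\cap V=\emptyset$ for $m\in A_1$, while for $m\in A_1$ (resp.\ $m\in A_1^c$) the component $\Sigma_m$ restricts to the corresponding component of the subproblem.

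The key combinatorial point is then to compare orderings. For $m,n\in A_1$ the subset $A_1$ never separates $\{m,n\}$, so the first index that separates them in $<_\A$ has the form $\ell\ge 2$ and separates them in $\A_U$ as well; thus $<_{\A_U}$ coincides with the restriction of $<_\A$ to $A_1$, and symmetrically $<_{\A_V}$ with its restriction to $A_1^c$. On the other hand, if $m\in A_1$ and $n\in A_1^c$ then $j=1$ already gives $m<_\A n$, so every element of $A_1$ precedes every element of $A_1^c$; consequently the increasing enumeration $\eta$ of $\{1,\ldots,d+1\}$ restricts to the increasing enumeration $\eta_U$ of $A_1$ on its first $|A_1|$ values and to the increasing enumeration $\eta_V$ of $A_1^c$ on the remaining ones, that is, $\eta(i)=\eta_U(i)$ for $i\le|A_1|$ and $\eta(|A_1|+i)=\eta_V(i)$ otherwise. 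Applying the inductive hypothesis to the $U$-subproblem gives $N_i\in\Sigma_{\eta_U(i)}$ for $i\le|A_1|$ (using that the $y$-end node $N_1$ of the $U$-subchain is the $y$-analog end of that subproblem), and applying it to the $V$-subproblem gives $N_{|A_1|+i}\in\Sigma_{\eta_V(i)}$ (the $x$-end node $N_{d+1}$ being the $x$-analog end there). Combining these with the identifications of the previous paragraph yields $N_i\in\Sigma_{\eta(i)}$ for all $i$.

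I expect the main obstacle to be the bookkeeping of orientations: one must keep straight that in $U$ it is $\alpha$ (not $y$) that is the ``$y$-coordinate'' of the subproblem, while in $V$ it is $\alpha'$ that is the ``$x$-coordinate'', and correspondingly that the $U$-subchain inherits the $y=0$ end of the chain whereas the $V$-subchain inherits the $x=0$ end. Getting these identifications right is precisely what forces the first $|A_1|$ nodes to be labelled by $A_1$ and the last $|A_1^c|$ by $A_1^c$, in agreement with $A_1<_\A A_1^c$. A convenient independent check is the fact recorded just before the statement, that $N_i\in\Sigma_j$ forces $E_i,\ldots,E_d\subset\wt{V}(x,u_j)$, which pins down on which side of the chain a given $\Sigma_j$ can meet.
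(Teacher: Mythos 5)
Your proposal is correct, but it takes a genuinely different route from the paper. The paper's proof of Proposition~\ref{prop:node} involves no new induction: writing $\tau(i)$ for the unique index with $N_i\in\Sigma_{\tau(i)}$ and reducing to the case $\eta=\mathrm{id}$, it uses the fact that $N_i\in\Sigma_j$ forces $E_i,\ldots,E_d$ to lie in the strict transform of $V(x,u_j)$ (hence $E_1,\ldots,E_{i-1}$ in that of $V(y,u_j)$) to conclude that for $i<k$ the strict transforms of $V(x,u_{\tau(i)})$ and $V(y,u_{\tau(k)})$ both contain $E_i$; Lemma~\ref{lem:empty} then forbids $\tau(k)<_\A\tau(i)$, so $\tau$ is increasing and must be the identity. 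All chart-by-chart work is thus confined to Proposition~\ref{prop:smooth} and Lemma~\ref{lem:empty}, which are quoted as black boxes. You instead run that same $U\cup V$ induction one more time, directly on the statement: your identification of $U_k$ (resp.\ $V_k$) with the desingularization of the smaller problem for $\A_U$ (resp.\ $\A_V$) up to free variables, the resulting splitting of the chain (nodes $N_1,\ldots,N_{|A_1|}$ on the $U$-side, the rest on the $V$-side, with $E^*$ in between) and of the components $\Sigma_m$, and the compatibility of $<_\A$ with $<_{\A_U}$ and $<_{\A_V}$ together with ``every element of $A_1$ precedes every element of $A_1^c$'' are all correct and do close the induction. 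The paper's route buys brevity and no orientation bookkeeping, since Lemma~\ref{lem:empty} was needed anyway; yours buys independence from Lemma~\ref{lem:empty} and makes transparent why the labels appear in blocks mirroring the recursive structure of $<_\A$ --- indeed your argument would reprove Corollary~\ref{cor:rational} along the way and could be folded into the proof of Proposition~\ref{prop:smooth}. One step you should tighten: you justify $\Sigma_m\cap U_k=\emptyset$ for $m\in A_1^c$, and the matching of $\Sigma_m$, $m\in A_1$, with the subproblem's components, by looking at the fibres of $U\to S$, but the relevant map is $U_k\to S$; the correct justification is that your product description of $U_k$ shows its singular locus lies over $V(u_{A_1})$ and decomposes into the subproblem's components, and that the global component containing a given connected piece dominating $V(u_j)$ must be $\Sigma_j$ because over the generic point $Q_j$ of $V(u_j)$ the singular locus consists of a single point.
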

\begin{proof}
Since the $\Sigma_j$'s are disjoint and each node belongs to at least one of them, it is clear that each node $N_i$ is contained in exactly one $\Sigma_j$; we denote such index by $j:=\tau(i)$.
\begin{figure}[t]
\label{fig:desing}

\begin{minipage}{\linewidth}
\begin{minipage}{0.57\linewidth}
\begin{overpic}[width=\linewidth, trim=0 1cm 0 0, clip]{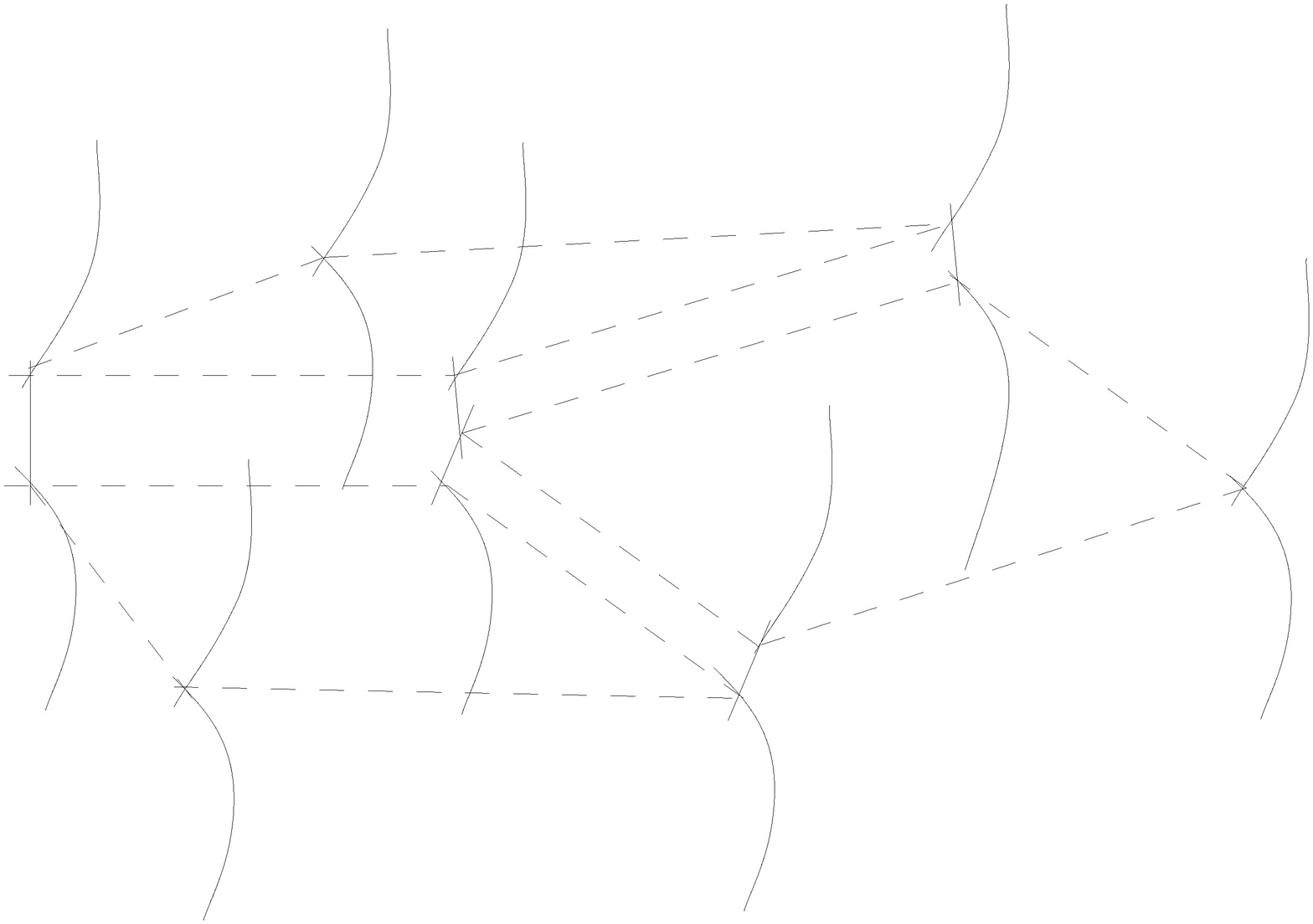}
\put(25,24){\small$\Sigma_3$}
\put(65,37){\small$\Sigma_2$}
\put(17,44){\small$\Sigma_1$}
\put(29,44){\tiny$N_1$}
\put(29,38){\tiny$N_2$}
\put(29,32){\tiny$N_3$}
\end{overpic}
\begin{overpic}[width=\linewidth, trim=0 1cm 0 0, clip]{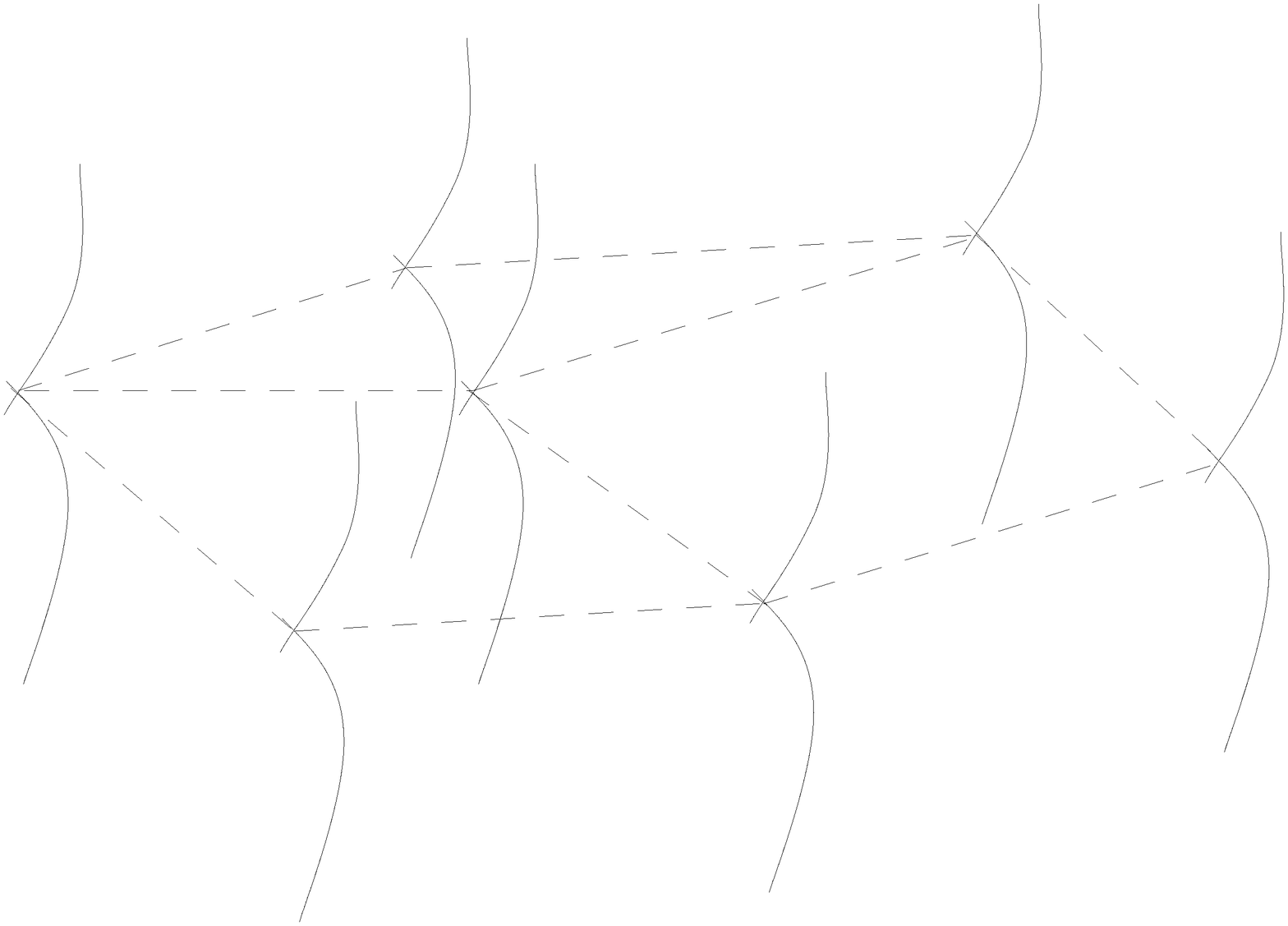}
\put(87,25){\tiny$x=0$}
\put(87,45){\tiny$y=0$}
\end{overpic}
\begin{overpic}[width=\linewidth, trim=0 7cm 0 5cm, clip]{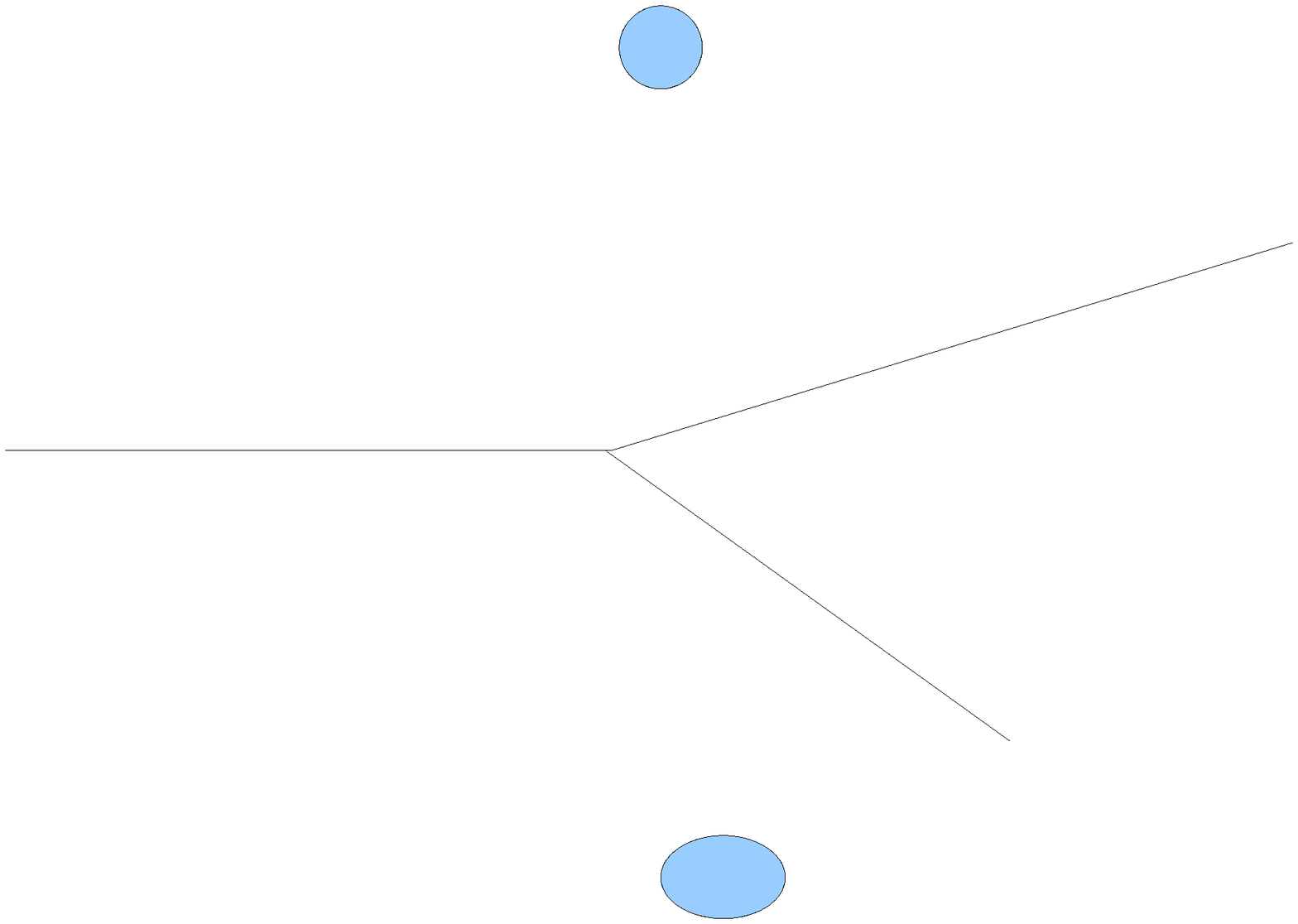}
\put(35,25){\small$V(u_1)$}
\put(70,15){\small$V(u_2)$}
\put(25,5){\small$V(u_3)$}
\end{overpic}

\end{minipage}
\begin{minipage}{0.42\linewidth} 
\small\textbf{Figure 1.} In this picture we describe the maps 
 \[
 \wt{\pi}_S\col\wt{T}_S^\A\stackrel{\phi}\longrightarrow T_S\stackrel{\pi_S}\lra S
 \]
  over the special point of $B$, in the case $d=2$ for $\A=(\{1\},\{2\})$.\par
  
\hspace{5pt} At the bottom we have depicted the variety $S$ with its divisors $V(u_i)$.\par
   
\hspace{5pt} At the middle, the variety $T_S$ with the branch $y=0$ being the top one. The inverse image of each $V(u_i)$ via the map $\pi_S$ is the union of the Weil divisors $V(x,u_i)$ and $V(y,u_i)$.  The map $\phi$ is the blowup of $T_S$ along $V(x,u_1)$ and then $V(x,u_2)$.\par
   
 \hspace{5pt}At the top we have the variety $\wt{T}_S^\A$. The dotted lines bound the singular loci $\Sigma_1$, $\Sigma_2$ and $\Sigma_3$ of the map $\wt{\pi}_S$. The permutation $\eta$ in this case is the identity, thus the node $N_i$ belongs to $\Sigma_i$, see Proposition~\ref{prop:node} and Corollary~\ref{cor:rational}. Note that in the central fiber we have two $\phi$-exceptional curves, see Proposition~\ref{prop:smooth} and Corollary~\ref{cor:chain}. The node $N_1$ belongs to the strict transforms of $V(x,u_1)$, $V(y,u_1)$, $V(y,u_2)$ and $V(y,u_3)$, while $N_2$ belongs to the ones of $V(x,u_1)$, $V(x,u_2)$, $V(y,u_2)$ and $V(y,u_3)$, and finally $N_3$ belongs to the ones of $V(x,u_1)$, $V(x,u_2)$, $V(x,u_3)$ and $V(y,u_3)$, see Corollary~\ref{cor:rational}.
\end{minipage}
\end{minipage}
\end{figure}

 Without loss of generality we may assume that $\eta$ is the identity. This means that the $\mathcal{A}$-ordering is the usual one. Since $N_i\in \Sigma_{\tau(i)}$ we get that the strict transform of $V(x,u_{\tau(i)})$ contains the rational curves $E_i,\ldots, E_d$. Hence the strict transform of $V(y,u_{\tau(i)})$ contains the rational curves $E_1,\ldots, E_{i-1}$. Given a $k>i$ the strict transform of $V(y,u_{\tau(k)})$ contains the rational curves $E_1,\ldots, E_{k-1}$. Hence the intersection of the strict transforms of $V(x,u_{\tau(i)})$ and $V(y,u_{\tau(k)})$ contains $E_i$ and therefore is nonempty. By Lemma~\ref{lem:empty} we have $\tau(i)<\tau(k)$ for every $i<k$, and we conclude that $\tau$ is the identity.
\end{proof}

\begin{Cor}
\label{cor:rational}
Let $\A$ be a smooth collection for $\{1,\ldots,d+1\}$. Let $\eta$ be a permutation of $1,\ldots,d+1$ such that $\eta(1)<_\A\eta(2)<_\A\ldots<_\A\eta(d+1)$. Then $E_i,\ldots, E_{d+1}$ is contained in the strict transform of $V(x,u_{\eta(i)})$ via $\phi\col\wt{T}_S^{\A}\to T_S$. Furthermore, the intersection of the strict transforms of the divisors 
\[
V(x,u_{\eta(1)}),\ldots,V(x,u_{\eta(i)}), V(y,u_{\eta(i)})\ldots, V(y,u_{\eta(d+1)})
\]
 is exactly the node $N_i$. 
\end{Cor}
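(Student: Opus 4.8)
The plan is to read the first assertion off Proposition~\ref{prop:node} and to obtain the second by locating $N_i$ in every listed divisor, confining the whole intersection to the central fiber over $0\in S$, and then collapsing it to the single point $N_i$ with the help of Lemma~\ref{lem:empty}. For the first assertion, Proposition~\ref{prop:node} gives $N_i\in\Sigma_{\eta(i)}$ under the hypothesis $\eta(1)<_\A\cdots<_\A\eta(d+1)$, and the fact recorded just before that proposition turns $N_i\in\Sigma_{\eta(i)}$ into the containment $E_i,\ldots,E_d\subseteq\wt V(x,u_{\eta(i)})$ (the index $d+1$ in the statement should read $d$, since $E_1,\ldots,E_d$ are the only rational curves). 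I will also use the mirror containment, isolated inside the proof of Proposition~\ref{prop:node}: if $N_k\in\Sigma_{\eta(k)}$ then $E_1,\ldots,E_{k-1}\subseteq \wt V(y,u_{\eta(k)})$.

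To prove the second assertion I first check that $N_i$ lies in each of the $d+2$ listed divisors. The two middle ones are immediate: the component $\Sigma_{\eta(i)}$ of the singular locus over $V(u_{\eta(i)})$ is contained in both $\wt V(x,u_{\eta(i)})$ and $\wt V(y,u_{\eta(i)})$, so $N_i\in\Sigma_{\eta(i)}$ lies in both. For $m<i$ the containment $\wt V(x,u_{\eta(m)})\supseteq E_m\cup\cdots\cup E_d$ shows $N_i\in \wt V(x,u_{\eta(m)})$, and for $n>i$ the mirror containment $\wt V(y,u_{\eta(n)})\supseteq E_1\cup\cdots\cup E_{n-1}$ shows $N_i\in \wt V(y,u_{\eta(n)})$. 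Hence $N_i$ belongs to the intersection.

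Next I confine the intersection to the central fiber. Each of $\wt V(x,u_j)$ and $\wt V(y,u_j)$ maps into $V(u_j)\subseteq S$, since the blowups are proper and isomorphisms over the generic locus, while $V(x,u_j)$ and $V(y,u_j)$ lie over $V(u_j)$. The indices occurring in the list are $\eta(1),\ldots,\eta(i)$ (from the $x$-divisors) and $\eta(i),\ldots,\eta(d+1)$ (from the $y$-divisors), whose union is $\{1,\ldots,d+1\}$, so the intersection maps into $\bigcap_j V(u_j)=\{0\}$ and therefore sits inside the central fiber, namely the chain $E_1,\ldots,E_d$ together with the two branches. It now suffices to intersect the two middle divisors on this fiber. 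Viewing the central fiber as a path with $N_i$ separating an \emph{upper} half (towards $x=0$, carrying $E_i,\ldots,E_d$) from a \emph{lower} half, I claim the trace of $\wt V(y,u_{\eta(i)})$ on the upper half is exactly $\{N_i\}$: for any upper node $N_k$ with $k>i$ one has $\eta(i)<_\A\eta(k)$ and $N_k\in\wt V(x,u_{\eta(k)})$, so Lemma~\ref{lem:empty} forces $N_k\notin\wt V(y,u_{\eta(i)})$; since every upper component carries such an excluded node, $\wt V(y,u_{\eta(i)})$ contains no upper component and, by the explicit charts of Proposition~\ref{prop:smooth}, meets each of them only in nodes. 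The symmetric argument bounds the lower trace of $\wt V(x,u_{\eta(i)})$ by $\{N_i\}$; intersecting the two traces yields $\{N_i\}$, which together with the previous paragraph proves the claim.

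The delicate point is this last step: promoting the soft containments ``$\supseteq E_i\cup\cdots\cup E_d$'' to the sharp statement that each middle divisor meets the opposite half of the central fiber only at $N_i$. This is exactly where Lemma~\ref{lem:empty} is indispensable, to kill the adjacent nodes, and where one must fall back on the explicit smooth charts of Proposition~\ref{prop:smooth} to prevent a strict transform from grazing an exceptional component at a non-nodal point. The remaining steps, by contrast, are formal.
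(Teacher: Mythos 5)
The paper states this corollary with no proof at all---it is meant to follow from Proposition~\ref{prop:node}, the containment fact recorded just before it, and Lemma~\ref{lem:empty}---and your reconstruction assembles exactly those ingredients, so in spirit you are doing what the authors intended. Your structure is sound: the first assertion is indeed Proposition~\ref{prop:node} plus the stated fact (and reading $E_{d+1}$ as the strict transform of the branch $x=0$ is the natural interpretation, since that branch lies in every $\wt V(x,u_j)$); the confinement of the intersection to the fiber over $0\in S$ via $\bigcap_j V(u_j)=\{0\}$ is clean and correct; and the use of Lemma~\ref{lem:empty} to expel the nodes $N_k$, $k\neq i$, from the two middle divisors is precisely the mechanism the paper itself uses in the proof of Proposition~\ref{prop:node}.

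Two points need repair, though neither is fatal. First, your sentence ``every upper component carries such an excluded node'' fails in the edge cases $i=d+1$ (and symmetrically $i=1$): there the upper half is just the branch $x=0$, whose only node is $N_i$ itself, so no excluded node is available. The fix is direct: the strict transform of the branch $x=0$ maps onto the branch $x=0$ of $T_S$, whose generic point has $y\neq 0$ and hence does not lie in $V(y,u_{\eta(i)})$; since $\wt V(y,u_{\eta(i)})$ maps into $V(y,u_{\eta(i)})$, the branch cannot be contained in it, and its trace lands in $\phi^{-1}(\text{closed point})\cap(\text{branch})=\{N_{d+1}\}$. Second, the step ``meets each of them only in nodes,'' which you correctly flag as the delicate point, deserves a real argument rather than a gesture at the charts of Proposition~\ref{prop:smooth}: at a point $p$ of the central fiber that is not a node, the fiber of $\wt\pi_S$ is smooth, hence $\wt\pi_S$ is smooth at $p$, so the divisor cut by $u_{\eta(i)}$ on $\wt T_S^{\A}$ is irreducible near $p$; since that divisor is exactly $\wt V(x,u_{\eta(i)})+\wt V(y,u_{\eta(i)})$ (the resolution is small, so no exceptional divisors occur), only one of the two strict transforms can pass through $p$, and a non-nodal point of the upper half therefore cannot lie on $\wt V(y,u_{\eta(i)})$. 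With these two patches your argument is complete.
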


\begin{Rem}
\label{rem:Weil}
Using Proposition~\ref{prop:smooth}, we see that for any given regular local family of curves without internal nodes $\pi\col\C\to B$ there exists a desingularizaion of $\C_S:=\C\times_B S$ obtained by blowing up Weil divisors. Moreover the map $\wt\pi_S\col\wt{\C}_S\to S$ is a regular family of curves, since $\wt{\C}_S$ is smooth. Also note that Corollary~\ref{cor:chain} implies that the fiber of $\wt\pi_S$ over the special point $0$ of $S$ is $C(d)$; more generally the fibers of $\wt\pi_S$ are either the smooth curve $\C_\eta$ over the generic point $\eta$ of $S$ or curves of the form $C(k)$ for some $k\in\{0,\ldots,d\}$. 
\end{Rem}

\section{Local conditions}

Recall that $B=\Spec(K[[t]])$, $S=\Spec(K[[u_1,\ldots, u_{d+1}]])$ and consider the map $S\to B$ given by  $t=u_1\cdot u_2\cdot\ldots\cdot u_{d+1}$. Let $\pi\col\C\to B$ be a pointed regular family of nodal curves with special fiber $C$ and section $\sigma\col B\to\C$ through the smooth locus of $\pi$. We let $P:=\sigma(0)$. Consider $\C_S:=\C\times_B S$ and let $\pi_S\col\C_S\to S$ be the induced map; form the fiber diagram
  \[
\begin{CD}
\C_S @>f>> \C \\
@V\pi_SVV   @VV\pi V\\
S @>>> B
\end{CD}
\]\smallskip
Any section $S\to\C_S$ of the map $\pi_S$ induces a $B$-map $S\to\C$ by composition; conversely, every $B$-map $S\to\C$ induces a section of $\pi_S$. We will abuse notation using the same name for both the section and the $B$-map. \par

 Let $\delta\col S\to \C$ be a $B$-map. Assume that $\delta(0)=N$, where $N$ is a node of $C$. We can write the completion of the local ring of $\C$ at $N$ as
\[
\widehat{\O}_{\C,N}\simeq K[[x,y]].
\]
The map $\pi\col\C\to B$ is given by $xy=t$ locally around $N$. Up to multiplication by an invertible element, the map $\delta$ is given by 
\begin{equation}
\label{eq:section}
x=u_A\quad\quad\text{and}\quad\quad y=u_{A^c},
\end{equation}
where $A$ is a proper nonempty subset of $\{1,\ldots,d+1\}$. Note that, geometrically, this means that $\delta(Q_j)\subset V(x)$ if and only if $j\in A$, where $Q_j$ is the generic point of $V(u_j)\subset S$.\par

 Given sections $\delta_1,\ldots,\delta_m$ of $\pi_S$ passing through nodes of $C$, a subcurve $Y$ of $C$ and a node $N$ of $C$, we define
\begin{equation}
\label{}
a_j^N(Y):=\#\left\{k\; |\; \delta_k(0)=N\quad\text{and}\quad \delta_k(Q_j)\subset Y^c\right\}.
\end{equation}
 Note that if $N\notin \Sigma_Y$, then the index $j$ plays no role; in this case we simply write $a^N(Y)$. Also note that if $N\in Y^{sing}$, then $a^N(Y)=0$.\par

Recall that $S_{\{j\}^c}$ is the complement of $\bigcup_{i\neq j} V(u_i)$ in $S$ and it is given by
\[
S_{\{j\}^c}=\Spec\left(K[[u_1,\ldots,u_{d+1}]]_{u_{\{j\}^c}}\right).
\]
 Hence there exists a map $S_{\{j\}^c}\to S$. Let $\C_{S_{\{j\}^c}}:=\C_S\times_S S_{\{j\}^c}$, and denote by $g_j\col\C_{S_{\{j\}^c}}\to\C_S$ the projection onto the first factor. Form the following fiber diagram
 \[
\begin{CD}
\C_{S_{\{j\}^c}} @>g_j>> \C_S \\
@V\pi_jVV   @VV\pi_S V\\
S_{\{j\}^c} @>>> S
\end{CD}
\]
Let $f_j:=f\circ g_j$ and $\delta_1,\ldots,\delta_m$ be sections of $\pi_S$ passing through nodes of $C$. If we restrict these sections to $S_{\{j\}^c}$, we obtain sections $S_{\{j\}^c}\to \C_{S_{\{j\}^c}}$ passing through the smooth locus of $\pi_j$.

Let $\L$ be a degree-$e$ invertible sheaf over $\C$. Denote by $\L_S$ the pullback of $\L$ to $\C_S$ and define
\[
\M:=\L_S\otimes f^*\O_{\C}(m\cdot\sigma(B))\otimes\I_{\delta_1(S)|\C_S}\otimes\cdots\otimes \I_{\delta_m(S)|\C_S}.
\]
Note that the sheaf $\M$ induces a rational map $S\dra \J$, since the generic fiber of $\pi_S$ is smooth. 
We also define 
\[
\M_j:=g_j^*\M,\quad\text{for every } j=1,\ldots, d+1.
\]
  Since the restricions of the sections $\delta_1,\ldots,\delta_m$ to $S_{\{j\}^c}$ are sections passing through the smooth locus of $C$, the sheaf $\M_j$ is invertible. Also, since $S_{\{j\}^c}$ is the spectrum of a DVR, there exists an invertible sheaf $\O_{\C_{S_{\{j\}^c}}}(-Z_j)$, where 
\[
Z_j=\sum_{i=1}^p \l_{i,j}\cdot f_j^*C_i,
\]
such that $M_j\otimes \O_{\C_{S_{\{j\}^c}}}(-Z_j)$ is $\sigma$-quasistable. \par
  Given a degree-$e$ invertible sheaf $\L$, sections $\delta_1,\ldots,\delta_m$ of $\pi_S$, a subcurve $Y$ of $C$ and a node $N$ in the intersection of $C_r$ and $C_s$, we define
\begin{equation}
b_j^N(Y,\L):=\left\{\begin{array}{ll}
                   \l_{r,j}-\l_{s,j} & \text{if}\;\; C_r\subset Y\;\text{and}\; C_s\not\subset Y   \\
                   \l_{s,j}-\l_{r,j} & \text{if}\;\; C_r\not\subset Y\;\text{and}\; C_s\subset Y   \\ 
                   0& \text{otherwise}
                   \end{array}\right.
\end{equation}
Recall that, since we are working with curves with no internal nodes, every node $N$ is external and hence there exists $r\neq s$ such that $N$ is in the intersection of $C_r$ and $C_s$.

\begin{Prop}
\label{prop:generic}
Let $\L$ be a degree-$e$ invertible sheaf on $\C$, let $\delta_1,\ldots,\delta_m$ be sections of $\pi_S$ passing through nodes of $C$ and $Y$ be a subcurve of $C$ containing $P$. Then, for every $h\in\{1,\ldots,d+1\}$ we have
\[
-\frac{k_Y}{2}<  \deg(\L|_Y)-e_Y+\sum _{N \in C^{sing}} (a^N_{h}(Y)-b^N_{h}(Y,L))\leq \frac{k_Y}{2}.
\]
\end{Prop}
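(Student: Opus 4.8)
The plan is to localize the problem at the generic point $Q_h$ of $V(u_h)$, where the situation becomes a genuine smoothing of $C$, and to read off the inequality directly from $P$-quasistability. Localizing $S$ at $Q_h$ produces a DVR with uniformizer $u_h$, over which $\pi_S$ restricts to the smoothing $\pi_h\col\C_{S_{\{h\}^c}}\to S_{\{h\}^c}$ whose special fiber is again $C$ and through whose smooth locus $\sigma$ still passes at $P$. By construction the sheaf $\M_h\otimes\O_{\C_{S_{\{h\}^c}}}(-Z_h)$ is $\sigma$-quasistable, so its restriction $I$ to the special fiber $C$ is a degree-$e$ invertible sheaf that is $P$-quasistable with respect to $\ul e$. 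Since $P\in Y$, the defining condition of $P$-quasistability over $Y$ is exactly
\[
-\frac{k_Y}{2}<\deg(I|_Y)-e_Y\leq\frac{k_Y}{2},
\]
so the whole statement reduces to the degree identity
\[
\deg(I|_Y)=\deg(\L|_Y)+\sum_{N\in C^{sing}}\left(a_h^N(Y)-b_h^N(Y,\L)\right),
\]
which I would establish by computing the two factors of $I=(\M_h)|_C\otimes\O(-Z_h)|_C$ separately.

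First I would treat $\M_h=g_h^*\M$, ignoring the twist. The factor $\L_S$ restricts to $\L|_Y$ and contributes $\deg(\L|_Y)$; the factor $f^*\O_\C(m\sigma(B))$ meets $C$ only at $P\in Y$ and contributes $m$; and each ideal sheaf $\I_{\delta_k(S)|\C_S}$, once pulled back to $Q_h$, becomes the ideal of the smooth point $\delta_k(Q_h)$ and contributes $-1$ exactly when $\delta_k(Q_h)\in Y$. Using the local form $x=u_A$, $y=u_{A^c}$ of a section through a node $N$, the point $\delta_k(Q_h)$ lies on $Y^c$ precisely when $\delta_k(Q_h)\subset Y^c$, so among the sections through $N$ the number specializing into $Y$ equals their total number minus $a_h^N(Y)$. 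Summing over all nodes, the contribution $m$ of $\sigma$ cancels the total number of sections, and one is left with $\deg(\M_h|_C|_Y)=\deg(\L|_Y)+\sum_{N}a_h^N(Y)$.

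Next I would evaluate the twist. Since $\C_{S_{\{h\}^c}}$ is a regular surface over the trait and $Z_h=\sum_i\ell_{i,h}f_h^*C_i$ is supported on the special fiber, the degree of $\O(-Z_h)|_C$ over $Y$ is computed by the intersection numbers of the $f_h^*C_i$ with the components of $Y$. Using the relation $C_i\cdot C=0$ for the total fiber $C$, only the extremal nodes of $Y$ contribute: a node whose two branches both lie in $Y$, or both lie in $Y^c$, contributes $0$, in agreement with $b_h^N(Y,\L)=0$, while a node $N\in C_r\cap C_s$ with $C_r\subset Y$ and $C_s\not\subset Y$ contributes a term built from $\ell_{r,h}-\ell_{s,h}$, which is precisely the content of the definition of $b_h^N(Y,\L)$. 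A bookkeeping of signs then shows that twisting by $\O(-Z_h)$ changes the degree over $Y$ by $-\sum_N b_h^N(Y,\L)$, and together with the previous paragraph this yields the degree identity, hence the proposition.

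I would expect the main obstacle to be the second paragraph: correctly matching the local picture of the degenerating sections with the combinatorial quantity $a_h^N(Y)$. One must check that passing to the DVR at $Q_h$ really sends each $\delta_k$ to a smooth point on the branch dictated by whether $h\in A_k$, and that the bookkeeping of which specializations fall into $Y$ versus $Y^c$ is consistent across all nodes simultaneously, so that the $m\sigma$ term cancels cleanly. By contrast, the twister computation is routine intersection theory on a regular surface, and the final substitution into the quasistability inequality is immediate.
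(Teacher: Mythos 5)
Your proposal is correct and follows essentially the same route as the paper's proof: localize at the generic point $Q_h$ of $V(u_h)$, invoke the $\sigma$-quasistability of $\M_h\otimes\O_{\C_{S_{\{h\}^c}}}(-Z_h)$ (which holds by the very choice of $Z_h$), and reduce the inequality to the degree identity, computed by splitting off the contribution of $\sigma$ and the sections (giving the $a^N_h(Y)$ terms) from that of the twister $Z_h$ (giving the $b^N_h(Y,\L)$ terms via $Z_h\cdot Y$). The only difference is cosmetic: you spell out the intersection-theoretic bookkeeping for $Z_h\cdot Y$ that the paper merely asserts.
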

\begin{proof}
Let $Q_h$ be the generic point of $V(u_h)$. Identify $Y$ with $Y\times_B Q_h$. By the definition of $a^N$ and by the fact that each section $\delta_i$ goes through some node $N$, we clearly have
\[
\deg(\M_j|_Y)=\deg(\L|_Y)+\sum_{N\in C^{sing}}a^N_{h}(Y).
\]
 Indeed, the sum $\sum a^N_{h}(Y)$ is the number of sections $\delta_i$ such that $\delta_i(Q_h)\in Y^c$. It follows that 
\[
\deg(\M_j\otimes\O_{\C_{S_{\{h\}^c}}}(-Z_h)|_Y)=\deg(L|_Y)+\sum_{N\in C^{sing}}a^N_h(Y)-Z_h\cdot Y.
\]
However we have
\[
Z_h\cdot Y=\sum_{N\in C^{sing}}b^N_h(Y,L),
\]
which concludes the proof.
\end{proof}
 
 Let $\phi\colon\wt{\C}_S\to\C_S$ be a fixed desingularization of $\C_S$ as in Remark~\ref{rem:Weil}. Let $\wt{\Delta}_j$ be the strict transform of $\delta_j(S)$. Since $\wt{\C}_S$ is regular, it follows that $\wt{\Delta}_j$ is a Cartier divisor. We define
 \[
 \wt{C}:=(\pi_S\circ\phi)^{-1}(0),
 \]
 the special fiber of the map $\pi_S\circ\phi$. Recall that, by Proposition~\ref{prop:smooth}, we have an identification of $\wt C$ with $C(d)$. Let $\C_{[u_j\;i]}$ be the closure of $\wt{g}_j(f_j^{-1}(C_i))$ in $\wt{\C}_S$, where $\wt{g}_j$ is the induced map $\wt{g}_j:\C_{S_{\{j\}^c}}\to\wt{\C}_S$ and $f_j=f\circ g_j$. Finally, we let 
 \begin{equation}
 \label{eq:zj}
 \overline{Z}_j:=\sum_{i=1}^p \ell_{i,j}\cdot \C_{[u_j\;i]},
 \end{equation}
and define the invertible sheaf on $\wt\C_S$
 \begin{equation}
 \label{eq:mt}
\wt{\M}_{\phi}:=\phi^*(\L_S)\otimes\phi^*f^*(\O_\C(m\cdot\sigma(B)))\otimes\O_{\wt{\C}_S}\left({-\sum_{j=1}^m\wt{\Delta}_j}-\sum_{j=1}^{d+1}\overline{Z}_j\right).
\end{equation}

\begin{Thm}
\label{thm:map}
Let $\L$ be a degree-$e$ invertible sheaf on $\C$ and $\delta_1,\ldots,\delta_m$ be sections of $\pi_S$. There exists a map $S\to \Jb$ extending the rational map defined by $\M$ if the following two conditions hold for every subcurve $Y\subset C$ containing $P$\smallskip

\begin{enumerate}
\item For every $j_1,j_2=1,\ldots,d+1$ and every node $N\in\Sigma_Y$, we have
\[
|(a^N_{j_1}(Y)-b^N_{j_1}(Y,L))-(a^N_{j_2}(Y)-b^N_{j_2}(Y,L))|\leq 1.
\]

\item For every function $j\col C^{sing}\to \{1,\ldots,d+1\}$, we have
\[
-\frac{k_Y}{2}< \deg(\L|_Y)-e_Y+\sum _{N \in C^{sing}} (a^N_{j(N)}(Y)-b^N_{j(N)}(Y,L)) \leq \frac{k_Y}{2}.
\]
\end{enumerate}
In this case, if $\phi:\wt{C}_S\to\C_S$ is any desingularization of $\C_S$ as in Remark~\ref{rem:Weil} then this map is induced by the invertible sheaf $\wt{\M}_{\phi}$.
\end{Thm}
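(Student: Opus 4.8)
The plan is to use that $\Jb$ is a fine moduli space for the functor $\mathbf J$, so that producing the desired map $S\to\Jb$ amounts to exhibiting a single $\sigma_S$-quasistable sheaf $\I$ on $\C_S$ of degree $e$ whose restriction to the generic point $\eta_S$ agrees with $\M$; since $\Jb$ is separated and $\eta_S$ is dense, any such $\I$ automatically induces the unique extension of the rational map defined by $\M$. The natural candidate is $\I:=\phi_*\wt\M_\phi$, the pushforward along a desingularization $\phi\colon\wt{\C}_S\to\C_S$ as in Remark~\ref{rem:Weil} of the invertible sheaf $\wt\M_\phi$ of \eqref{eq:mt}. Because $\wt\M_\phi|_{\eta_S}=\M|_{\eta_S}$ (all the twisting divisors are supported over the special locus) we will have $\I|_{\eta_S}=\M|_{\eta_S}$, and the asserted independence of the construction from $\phi$ will follow again from separatedness of $\Jb$, all the $\wt\M_\phi$ agreeing over $\eta_S$.

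Next I would reduce $\sigma_S$-quasistability of $\I$ to a fiberwise statement. By Remark~\ref{rem:Weil} the map $\wt\pi_S\colon\wt\C_S\to S$ is a regular family whose fibers are smooth over $\eta_S$, equal to $C$ over the generic points $Q_j$ of $V(u_j)$, and of the form $C(k)$ over the deeper strata, with $C(d)$ over the closed point. Over each $Q_j$ the center of no blowup meets the fiber (Corollary~\ref{cor:chain} gives a chain of length $0$), so $\phi$ is an isomorphism there and $\I$ restricts to $\M_j\otimes\O(-Z_j)$, whose $\sigma$-quasistability is exactly the content of Proposition~\ref{prop:generic}. It then remains to treat the strata where genuine contraction occurs, the decisive case being the central fiber $C(d)$: there I would invoke Proposition~\ref{prop:pushforward}, for which it suffices to show that $\wt\M_\phi|_{C(d)}$ is $\phi$-admissible and $P$-quasistable over every connected subcurve with connected complement not contracted by $\phi$. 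The remaining requirements, namely that $\I$ is $S$-flat with torsion-free rank-$1$ fibers of degree $e$, are supplied by admissibility together with \cite[Propositions 5.2 and 5.3]{CEP}, exactly as in the proof of Proposition~\ref{prop:pushforward}.

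The heart of the argument, and the step I expect to be the main obstacle, is the dictionary between the combinatorial quantities $a^N_j(Y)$, $b^N_j(Y,L)$ and the degrees of $\wt\M_\phi$ on $C(d)$. Using the local analysis of Section~\ref{sec:des}---in particular the $\A$-ordering, Proposition~\ref{prop:node} and Corollary~\ref{cor:rational}, which locate which nodes $N_i$ and exceptional curves $E_i$ of each chain are met by the strict transforms $\wt\Delta_k$ and by the twisting divisors $\overline{Z}_j$---I would compute, for a sub-chain $E_a\cup\cdots\cup E_b$ over a node $N$, that $\deg(\wt\M_\phi|_{E_a\cup\cdots\cup E_b})$ equals a difference $(a^N_{j_1}(Y)-b^N_{j_1}(Y,L))-(a^N_{j_2}(Y)-b^N_{j_2}(Y,L))$ for the indices $j_1,j_2$ occupying the relevant positions in the $\A$-ordering. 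Condition~(1) then says precisely that every such degree lies in $\{-1,0,1\}$, i.e.\ that $\wt\M_\phi$ is admissible. Similarly, every connected lift $Y'\subset C(d)$ with connected complement of a subcurve $Y\ni P$ is determined by choosing, at each node $N$, how far $Y'$ runs along the chain, which is encoded by a choice function $j\colon C^{sing}\to\{1,\ldots,d+1\}$; the corresponding degree satisfies $\deg(\wt\M_\phi|_{Y'})-e(d)_{Y'}=\deg(\L|_Y)-e_Y+\sum_{N}(a^N_{j(N)}(Y)-b^N_{j(N)}(Y,L))$, so that condition~(2), as $j$ ranges over all functions, is exactly $P$-quasistability of $\wt\M_\phi$ over all such $Y'$. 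Here the symmetry ``quasistable over $Y$ iff over $Y^c$'' lets me restrict to $Y\ni P$.

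Finally, with admissibility and quasistability of $\wt\M_\phi|_{C(d)}$ established, Proposition~\ref{prop:pushforward} yields that $\I|_C$ is $P$-quasistable over the closed point; the same computation applied to the partial desingularizations of Corollary~\ref{cor:chain}, whose special fibers are the intermediate curves $C(k)$, disposes of the remaining strata, so $\I$ is $\sigma_S$-quasistable on every fiber and hence lies in $\mathbf J(S)$. Since $\I|_{\eta_S}=\M|_{\eta_S}$, the induced map $S\to\Jb$ extends the rational map defined by $\M$, and its identification with the one induced by $\wt\M_\phi$ for an arbitrary $\phi$ follows from separatedness. The principal difficulty throughout is the bookkeeping in the dictionary above: correctly placing the intersections of the $\wt\Delta_k$ and $\overline{Z}_j$ along the chains, which is where the full force of Section~\ref{sec:des} is needed.
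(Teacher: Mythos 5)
Your proposal follows essentially the same route as the paper: it also takes $\phi_*\wt{\M}_\phi$ as the candidate sheaf, reduces via Proposition~\ref{prop:pushforward} to checking that $\wt{\M}_\phi$ is $\phi$-admissible and quasistable on connected non-contracted subcurves of the central fiber $C(d)$, and then performs exactly the degree computations you outline, identifying condition (1) with admissibility and condition (2) with quasistability over such subcurves. The only cosmetic difference is that you check the non-closed fibers stratum by stratum (over the $Q_j$ via the choice of the $Z_j$, and over intermediate strata via Corollary~\ref{cor:chain}), whereas the paper's argument needs only the closed fiber, the remaining fibers being handled by openness of quasistability and the fact that $S$ is local.
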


\begin{proof}
Throughout the proof we fix a desingularization $\phi:\wt{C}_S\to \C_S$ of $\C_S$ as in Remark~\ref{rem:Weil} and set $\wt{\M}:=\wt{\M}_{\phi}$.
It is enought to prove that under the hypothesis the sheaf $\phi_*(\wt{\M})$ is $\sigma$-quasistable. By Proposition~\ref{prop:pushforward} it suffices to check that $\wt{\M}$ is admissible and $\sigma$-quasistable over each connected subcurve $Y$ of $\wt{C}$ with connected complement such that $Y$ and $Y^c$ are not contracted by the map $\phi$.\par
  We begin by computing the degrees of the restriction of $\wt{\M}$ to the components of the special fiber. Let $E$ be a chain of $\phi$-exceptional components and let $N=\phi(E)$. Clearly the degree of 
  \[
  \phi^*(\L_S)\otimes\phi^*f^*(\O_\C(m\cdot\sigma(B)))|_E
  \]
  is zero. The same property holds for $\O_{\wt{C}_S}(-\wt{\Delta}_j)|_E$ if the section $\delta_j$ does not pass through the node $N$. Since $E$ is contracted we can look locally around the node $N$. Let $T:=\Spec{\hat{\O}_{C,N}}$; the subcurve $E$ can be seen as a subcurve of the special fiber of the map $\wt{T}_S^\A\to S$, for some collection $\A$, as in Section~\ref{sec:des}. Let $\{N_i,N_j\}= E\cap E^c$ be the extremal nodes of the chain. It follows from Proposition~\ref{prop:node} that $N_i\in\Sigma_{\eta(i)}$ and $N_j\in\Sigma_{\eta(j)}$. Without loss of generality we will assume that $\eta$ is the identity. \par
  Let $Y$ be a fixed subcurve of $C$ containing $P$ and admitting $N$ as an extremal node. Let $\delta_k$ be a section through $N$. Up to renaming $i$ and $j$, we can assume that the strict transform of $Y\times V(u_{i})$  does not contain the node $N_j$. Let $Q_i$ be the generic point of $V(u_i)$.  Set $Y_i:=Y\times Q_i\subset \wt{\C}_S$ and $Y_{i,0}:=\overline{Y}_i\cap \wt{C}$, where the bar denotes the closure in $\wt{\C}_S$.\par
   The degree of $\O_{\wt\C_S}(-\wt{\Delta}_k)|_{Y_{i}}$ is $-1$ if $\delta_k(Q_{i})\in Y_{i}$, and $0$ otherwise. Since the degree of $\O_{\wt\C_S}(-\wt{\Delta}_k)|_{Y_{i,0}}$ is the same as the degree of $\O_{\wt\C_S}(-\wt{\Delta}_k)|_{Y_i}$, then we have 
  \[
  \deg(\O_{\wt\C_S}(-\wt{\Delta}_k)|_E)=\left\{\begin{array}{rl}
                                           1 & \quad\text{if $\delta_k(Q_i)\in Y_i$ and $\delta_k(Q_j)\notin Y_j$,} \\
                                           -1 & \quad\text{if $\delta_k(Q_i)\notin Y_i$ and $\delta_k(Q_j)\in Y_j$,} \\
                                           0 & \quad\text{otherwise.}
                                           \end{array}\right.
   \]
   Indeed, notice that $\overline{Y_{j,0}\setminus Y_{i,0}}\cup \overline{Y_{i,0}\setminus Y_{j,0}}$ consists of $E$ and other chains of rational curves contracted by $\phi$; since the section $\delta_k$ goes through the node $N$, it follows that the line bundle $\O_{\wt\C_S}(-\wt{\Delta}_k)$ restricted to these other chains has degree $0$.
   Similarly, we can compute: 
\[
  \deg(\O_{\wt\C_S}(-\C_{[u_k\;r]}|_E))=\left\{\begin{array}{rl}
                                           1 & \quad\text{if $k=j$ and $C_r\subset Y$,}\\
                                           1 & \quad\text{if $k=i$ and $C_r\subset Y^c$,} \\
                                           -1& \quad\text{if $k=j$ and $C_r\subset Y^c$,}\\
                                           -1& \quad \text{if $k=i$ and $C_r\subset Y$,}\\
                                           0 & \quad\text{otherwise.}
                                           \end{array}\right.
   \]
Summing up all the contributions, we get that the degree of $\wt{\M}|_E$ is 
\[
(a^N_{j}(Y)-b^N_{j}(Y,L))-(a^N_{i}(Y)-b^N_{i}(Y,L))
\]
and this shows that the admissibility of $\wt{\M}$ is equivalent to condition (1).\par

     Let $Z$ be a connected subcurve of $\wt{C}$ with connected complement such that neither $Z$ nor $Z^c$ are contracted by $\phi$ and set $Y:=\phi(Z)\subset C$. We can assume that $P\in Y$. We want to compute the degree of $\wt{\M}|_Z$. Again by Proposition~\ref{prop:node} each extremal node of $Z$ belongs to only one $\Sigma_j$. Let $j_Z:\Sigma_Z\to\{1,\ldots,d+1\}$ be the induced function; note that the extremal nodes of $Z$ map bijectively onto the extremal nodes of $Y$ and hence we can also consider $\Sigma_Y$ as a domain for the function $j_Z$.
     We have
\[
\deg(\phi^*(\L_S)\otimes\phi^*f^*(\O_\C(m\cdot\sigma(B)))|_Z)=\deg(\L|_Y)+m.
\]
If we fix $k$ such that $\delta_k(0)=N$, we also have 
\[
  \deg(\O_{\wt\C_S}(-\wt{\Delta}_k)|_Z)=\left\{\begin{array}{rl}
                                           -1 & \quad\text{if $N\in \Sigma_Y$ and $\delta_k(Q_{j_Z(N)})\in Y_{j_Z(N)}$}\\
                                           -1 & \quad\text{if $N\in Y\setminus\Sigma_Y$}\\
                                           0 & \quad\text{otherwise.}
                                           \end{array}\right.
\]    
Moreover, we have
\[
  \deg(\O_{\wt\C_S}(-\C_{[u_k\;r]})|_Z)=\left\{\begin{array}{rl}
                                           -\#(j_Z^{-1}(k)\cap\Sigma_{C_r}) & \quad\text{if $C_r\subset Y^c$}\\
                                           \#(j_Z^{-1}(k)\cap\Sigma_{C_r}) & \quad\text{if $C_r\subset Y$}.\\
                                           \end{array}\right.
\]      
Indeed, $j_Z^{-1}(k)$ is the collection of extremal nodes of $Z$ that belong to $\Sigma_k$. This means that over each node $N\in \Sigma_Y\setminus j_Z^{-1}(k)$ the intersection of the divisor $\C_{[u_k\; r]}$ with $Z$ is either empty or a chain of rational curves contracted by $\phi$ with image the node $N$. In either case the contribution to the intersection number $\C_{[u_k\;r]}\cdot Z$  is zero. On the other hand if $N \in j_Z^{-1}(k)$, then the intersection $\C_{[u_k\; r]}\cap Z\cap\phi^{-1}(N)$ is a single point, and in this case the contribution to the intersection number is 1. The case where $C_r\subset Y$ is analogous. 
  To sum up, if we define 
 \begin{equation}
 \label{eq:c}
 \begin{array}{rl}
  c:=&\displaystyle\sum_{N\in \Sigma_Y}\#\{k\; |\;\delta_k(0)=N\;\text{and}\;\delta_k(Q_{j_Z(N)})\in Y_{j_Z(N)}\}+\\
  &+\displaystyle\sum_{N\in Y^{sing}}\#\{k\;|\;\delta_k(0)=N\},
  \end{array}
 \end{equation}
 then the degree of $\wt{\M}|_Z$ is
 \[
 \deg(\L|_Y)+m-c+\sum_{k=1}^{d+1}\sum_{r=1}^p\ell_{r,k}\deg(\O_{\wt\C_S}(-\C_{[u_k\;r]})|_Z).
 \]

We note now that 
\begin{equation}
\label{eq:mca}
m-c=\sum_{N\in \Sigma_Y}a^N_{j_Z(N)}(Y)+\sum_{N\in C^{sing}\setminus\Sigma_Y}a^N(Y).
\end{equation}
In fact, we have a total of $m$ sections, and hence $m-c$ is the number of sections that do not satisfy the conditions in Equation~\ref{eq:c}, i.e. the number of sections that satisfy either $\delta_k(0)\in \Sigma_Y$ and $\delta_k(Q_{j_Z(N)})\in Y^c$ or $\delta_k(0)\in (Y^c)^{sing}$ and $\delta_k(Q_{j_Z(N)})\in Y^c$. This is clearly equal to the right hand side of Equation~\ref{eq:mca}.\par
  
  Let $\epsilon_r$ be $1$ if $C_r\subset Y$ and $-1$ otherwise.  We have
%$$\underset{B}{\sum_{A}}$$
 \begin{eqnarray*}
 \sum_{k=1}^{d+1}\sum_{r=1}^p\ell_{r,k}\deg(\O_{\wt\C_S}(-\C_{[u_k\;r]})|_Z)  &=  &
 \sum_{k=1}^{d+1}\sum_{r=1}^p\underset{N\in C_r}{\sum_{j_Z(N)=k}}\epsilon_r \ell_{r,k} \\
  &=&\sum_{N\in C^{sing}}\underset{r=1,\ldots,p}{\sum_{N\in C_r}}\epsilon_r\ell_{r,j_Z(N)}
 \end{eqnarray*}
% \[
 %\sum_{k=1}^{d+1}\sum_{r=1}^p\ell_{r,k}\deg(\O_{\C_S}(-C_{[u_k\;r]})|_Z)=\sum_{k=1}^{d+1}\sum_{r=1}^p\underset{\underset{N\in C_r}{j_Z(N)=k}}{\sum}\epsilon_r l_{r,k}=\sum_{N\in C^{sing}}\underset{\underset{r=1,\ldots,p}{N\in C_r}}{\sum}\epsilon_rl_{r,j_Z(N)}
% \]
and, since $N$ only belongs to two components we also have 
\[
\underset{r=1,\ldots,p}{\sum_{N\in C_r}}\epsilon_r\ell_{r,j_Z(N)}=-b^Y_{j_Z(N)}(Y,\L).
\]
Therefore, we conclude that 
\begin{eqnarray*}
\sum_{k=1}^{d+1}\sum_{r=1}^p\ell_{r,k}\deg(\O_{\wt\C_S}(-\C_{[u_k\;r]})|_Z)&=&-\sum_{N\in\Sigma_Y}b^Y_{j_Z(N)}(Y,\L)\\
&=&-\sum_{N\in C^{sing}}b^Y_{j_Z(N)}(Y,\L)
\end{eqnarray*}
and the proof is complete.
\end{proof}

\section{Curves with two components}

Let $\pi\colon\C\to B$ be a pointed smoothing of a nodal curve $C$ with section $\sigma\colon B\to \C$ through the smooth locus of $\pi$. Let $\L$ be a invertible sheaf of degree $e$ over $\C$. From now on, we assume that $C$ has two smooth components $C_1$ and $C_2$ meeting at $q$ nodes $N_1,\ldots, N_q$, with the marked point $P:=\sigma(0)$ on the component $C_1$.  Locally around each node $N_\l$, the completion of the local ring of $\C$ at $N_\l$ is given by
\[
\wh{\O}_{\C,N}\simeq K[[x,y]],
\]
where $x=0$ is the local equation of $C_1$ and $y=0$ is that of $C_2$. Hence, if we let $T=\Spec(K[[x,y]])$, then, for each node $N$, there exists a map $T\to \C$ taking the closed point of $T$ to $N$. Moreover, the composition map $T\to B$ is given by $t=xy$.\par

  Our goal is to resolve the rational map $\alpha^d_\L\colon\C^d\dashrightarrow \Jb$. Let $\wt\C^d$ be the desingularization of $\C^d$ obtained inductively as follows:
   First define $\wt\C^1:=\C^1$. Then assume that the desingularization $\wt\C^d$ of  $\C^d$ is given and let $\wt{\C}^{d+1}\to\wt{\C}^d\times_B\C$ be the sequence of blowups  along the strict transforms of the following Weil divisors in the stated order
   \begin{equation}
   \label{eq:Delta}
   \Delta_{d,d+1},\Delta_{d-1,d+1},\ldots,\Delta_{1,d+1},
   \end{equation}
   and then
   \begin{equation}
   \label{eq:C}
   C_1^{d+1}, C_1^{d}\times C_2,C_1^{d-1}\times C_2\times C_1, C_1^{d-1}\times C_2^2, \ldots, C_2^{d-1}\times C_1\times C_2,C_2^{d}\times C_1,C_2^{d+1},
      \end{equation}
      where $\Delta_{i,d+1}$ is the image of the section $\wt{\C}^d\to\wt{\C}^d\times_B\C$, induced by the composition $\delta_i\col\wt{\C}^d\to\C^d\to \C$, where the last map is the projection onto the $i$-th factor.\par
      
      \begin{Lem}
      The scheme $\wt{\C}^d$ is smooth. 
      \end{Lem}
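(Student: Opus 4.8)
The plan is to prove smoothness by induction on $d$, reducing the global statement to the purely local computation already carried out in Section~\ref{sec:des}. The base case $\wt{\C}^1=\C^1=\C$ is smooth since $\pi\col\C\to B$ is a regular family. For the inductive step, I would assume $\wt{\C}^d$ is smooth and analyze the sequence of blowups producing $\wt{\C}^{d+1}$ from $\wt{\C}^d\times_B\C$. The key observation is that smoothness is a local property, so it suffices to verify it in the completed local ring at each point of the special fiber, and away from the nodes of $C$ everything is already smooth. Thus the analysis concentrates at the nodes.

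The central reduction is to identify, locally at a node $N$ of $C$, the total space $\wt{\C}^{d+1}$ near the preimage of $N$ with the scheme $\wt{T}_S^{\A}$ studied in Section~\ref{sec:des}, for an appropriate collection $\A$. Concretely, I would write $\wh{\O}_{\C,N}\simeq K[[x,y]]$ with $xy=t$, and use the inductive description of $\wt{\C}^d$ together with Proposition~\ref{prop:smooth} to see that the completed local ring of $\wt{\C}^d$ at a point over $N$ is of the form $K[[u_1,\ldots,u_{d+1}]]$ with the map to $B$ given by $t=u_1\cdots u_{d+1}$, as asserted in the introduction. Under this identification the fiber product $\wt{\C}^d\times_B\C$ locally becomes $T_S=\Spec(K[[u_1,\ldots,u_{d+1},x,y]]/(xy-u_1\cdots u_{d+1}))$. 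The diagonal divisors $\Delta_{i,d+1}$ correspond, via Remark~\ref{rem:diagonal} and Equation~\eqref{eq:section}, to divisors of type $V(x-u_{A^c},y-u_A)$, hence to blowups along $D_A=V(x,u_A)$; similarly the divisors $C_1^{d+1}, C_1^d\times C_2,\ldots$ in \eqref{eq:C} restrict near $N$ to Weil divisors of the form $V(x,u_A)$ or $V(y,u_A)$. Therefore the whole sequence of blowups in \eqref{eq:Delta} and \eqref{eq:C} realizes, locally at $N$, a sequence of blowups of $T_S$ along divisors $D_{A}$.

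Having made this identification, smoothness will follow from Proposition~\ref{prop:smooth} once I check that the collection $\A$ of subsets arising from the prescribed order of blowups is a \emph{smooth collection} for $\{1,\ldots,d+1\}$, i.e.\ that for every pair of distinct indices $i,j$ some $A_\l$ separates them. I expect this combinatorial verification to be the main obstacle, and the reason the \emph{order} of blowups matters (as emphasized after the Theorem in the introduction): I must trace which subset $A_\l$ each blowup contributes, using the inductive ordering coming from $\wt{\C}^d$ for the first $d+1$ coordinates and the new diagonals/components for the $(d+1)$-st. The diagonals $\Delta_{i,d+1}$ pin down how the new index $d+1$ interacts with the existing ones, while the components in \eqref{eq:C} supply the remaining separating subsets among the old indices. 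I would argue that each required separating subset appears, so that the separation condition holds for all pairs; the careful bookkeeping of which $A_\l$ is produced at each step, and confirming that no pair is left unseparated, is precisely where the chosen order is essential. Once $\A$ is shown to be smooth, Proposition~\ref{prop:smooth} gives smoothness of $\wt{\C}^{d+1}$ near $N$, and since $N$ was arbitrary among the nodes, $\wt{\C}^{d+1}$ is smooth everywhere, completing the induction.
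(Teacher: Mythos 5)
Your overall strategy---induction on $d$, completing at a point lying over a node of $C$ to identify the local picture with $T_S$, recognizing the blowups as blowups along divisors of type $D_A$, and then invoking Proposition~\ref{prop:smooth}---is exactly the paper's route, and that part of your reduction is carried out correctly. However, the proposal stops short at the one step that actually carries the proof: verifying that the induced collection $\A$ is a smooth collection for $\{1,\ldots,d+1\}$. You describe this verification as ``the main obstacle,'' to be settled by careful order-dependent bookkeeping of which subsets the diagonals \eqref{eq:Delta} and the divisors \eqref{eq:C} contribute, and in the end you only assert that ``each required separating subset appears.'' That assertion is the entire content of the lemma once the local reduction is made, so leaving it as an expectation is a genuine gap.

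Moreover, your intuition about where the difficulty lies is off, which is likely why the gap remained. The verification is immediate once the local equations are recorded precisely: at a point where $\wt{\C}^d$ has coordinates $u_1,\ldots,u_{d+1}$, with $u_j$ the local equation of the strict transform $\C_{[u_j]}$ of some $C_{\epsilon_1}\times\cdots\times C_{\epsilon_d}$, the strict transform of the divisor $C_{[u_j]}\times C_1$ appearing in \eqref{eq:C} has local equation $(x,u_j)$, i.e., it is $D_{\{j\}}$ (and that of $C_{[u_j]}\times C_2$ has equation $(y,u_j)$, which by Remark~\ref{rem:diagonal} is equivalent to blowing up $D_{\{j\}^c}$). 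Since \eqref{eq:C} runs over all products of components, the collection $\A$ contains the singleton $\{j\}$ for \emph{every} $j$, and a collection containing every singleton trivially separates every pair of indices. No analysis of the diagonals and no tracking of the order of the blowups is needed: the smooth-collection criterion of Proposition~\ref{prop:smooth} is independent of the order, so the order is irrelevant to smoothness. It matters only later, for the admissibility and quasistability computations (via the $\A$-orderings and the structure of special points) that resolve the Abel map in Theorem~\ref{thm:main}.
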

      \begin{proof}
      We proceed by induction. Of course, $\wt{\C}^1$ is smooth.  Note that $\wt{\C}^d$ is given locally by $\Spec(K[[u_1,\ldots, u_{d+1}]])$. Moreover, the map $\wt{\C}^{d}\to B$ is given by $t=u_1\cdot\ldots \cdot u_k$ and each $u_j$ is the local equation of the stric transform $\C_{[u_j]}$ of some $C_{\epsilon_1}\times\ldots\times C_{\epsilon_d}$ via the map $\wt{\C}^d\to\C^d$, for $j=1,\ldots,k$ and some $\epsilon_1,\ldots,\epsilon_d\in\{1,2\}$. 
      We may assume that $k=d+1$, the other cases being analogous. Following the notation in Section~\ref{sec:des}, we see that $S$ is the local description of $\wt{\C}^d$ and $T$ the one of $\C$. Moreover, the collection $\A$ induced by the sequence of blowups \eqref{eq:Delta} and \eqref{eq:C} is smooth, because there exists a set $A_\ell\in \A$ with only $j$ as an element for each $j$. Therefore, by Proposition~\ref{prop:smooth}, $\wt{\C}^d$ is smooth.
      \end{proof}
     
     Let $\phi:\wt{\C}^{d+1}\to\wt{\C}^d\times_B \C$ be the desingularization given above. The projection $\wt{\pi}:\wt{\C}^{d+1}\to \wt{\C}^d$ onto the first factor is a regular family of nodal curves. 
     As in \eqref{eq:mt}, we define the sheaf $\wt{\M}$ on $\wt{\C}^{d+1}$ as
     \[
     \wt\M:=\phi^*f^*(\L\otimes\O_{\C}(d\cdot \sigma(B))\otimes \O_{\wt{\C}^{d+1}}\left(-\sum_{i=1}^{d}\wt{\Delta}_{i,d+1}\right)\otimes \O_{\wt{\C}^{d+1}}(-\Z)
     \] 
where $\Z$ is defined as the sum of the strict transforms of the divisors $Z_{\underline{i}}$, defined in Equation~\eqref{eq:z}, via the map $\phi$. 
    
    \medskip
    
    We can now state our main result.
    
    \begin{Thm}
    \label{thm:main}
    There exists a map $\ol{\alpha}^d_\L\colon\wt{\C}^d\to \Jb$ induced by $\wt{\M}$ extending the map $\alpha^d_\L$.
      \end{Thm}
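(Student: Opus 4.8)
The plan is to reduce the global statement of Theorem~\ref{thm:main} to the local criterion of Theorem~\ref{thm:map} and then verify, by a direct combinatorial computation, that the two numerical conditions hold for the specific sequence of blowups defining $\wt\C^{d+1}$. First I would fix a point of $\wt\C^d$ lying over a node of $C$ and pass to the completed local ring, which by the preceding Lemma is $K[[u_1,\ldots,u_{d+1}]]$ with the map to $B$ given by $t=u_1\cdots u_{d+1}$. This puts us exactly in the setup $S\to B$ of Section~\ref{sec:des}, with $\C^d$ playing the role of the base and $\C$ the role of $T$; the sheaf $\wt\M$ restricts to a sheaf of the form $\wt\M_\phi$ as in~\eqref{eq:mt}. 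Thus it suffices to check that the desingularization $\wt\C^{d+1}$, locally modeled on some $\wt T_S^\A$, satisfies the hypotheses (1) and (2) of Theorem~\ref{thm:map} for the sections $\delta_1,\ldots,\delta_d$ given by the diagonals $\Delta_{i,d+1}$ together with the marked point, and for the twisting divisors $Z_{\underline i}$ of~\eqref{eq:z}.

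The key point is that the combinatorics simplify enormously because $C$ has only two components. For a subcurve $Y$ of $C$ containing $P$, the only nontrivial subcurve to test is $Y=C_1$ (its complement $C_2$), so the quantities $a^N_j(Y)$ and $b^N_j(Y,\L)$ reduce to bookkeeping of how each diagonal section $\delta_i$ meets $C_1$ versus $C_2$ at each node, encoded by which of the sets $A$ the index belongs to. I would then use the explicit local equations of the sections from~\eqref{eq:section}: a section $\delta_i$ through a node is given by $x=u_A$, $y=u_{A^c}$, and the prescribed ordering of the blowups in~\eqref{eq:Delta} and~\eqref{eq:C} determines the smooth collection $\A$ and hence the $\A$-ordering of $\{1,\ldots,d+1\}$. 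Condition~(1), the admissibility inequality, amounts to showing that as the index $j$ varies the quantity $a^N_j(Y)-b^N_j(Y,\L)$ changes by at most $1$ between consecutive values in the $\A$-ordering; this should follow from Corollary~\ref{cor:rational}, which pins down exactly which strict transforms pass through each node $N_i$ of the exceptional chain. Condition~(2), the quasistability inequality, I would deduce from Proposition~\ref{prop:generic}, which already establishes the strict inequality $-k_Y/2< \deg(\L|_Y)-e_Y+\sum_N(a^N_h(Y)-b^N_h(Y,\L))\le k_Y/2$ for each \emph{fixed} $h$; the content is to upgrade this to hold for every function $j\col C^{sing}\to\{1,\ldots,d+1\}$, which again uses condition~(1) to control the oscillation.

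The main obstacle I anticipate is verifying that the \emph{specific order} of the blowups in~\eqref{eq:Delta} and~\eqref{eq:C}, rather than some other smooth collection, is what makes condition~(1) hold: the authors themselves remark that a different order fails to resolve the map. Concretely, the twisting divisors $Z_{\underline i}$ coming from $\sigma$-quasistability on the generic fibers impose the values $\ell_{i,j}$, and one must show that these values, combined with the diagonals' incidence pattern dictated by the chosen $\A$-ordering, produce differences of absolute value at most $1$. I would handle this by induction on $d$, using the recursive structure $\wt\C^{d+1}\to\wt\C^d\times_B\C$: blowing up the diagonals $\Delta_{d,d+1},\ldots,\Delta_{1,d+1}$ first separates the new point from the previous $d$ points in the correct order, and the subsequent blowups along the products of components in~\eqref{eq:C} refine the chain so that each node $N_i$ lies in the correct $\Sigma_{\eta(i)}$. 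The inductive hypothesis supplies the admissibility and quasistability for the first $d$ factors, and one checks that appending the $(d+1)$-st factor in the prescribed order preserves both conditions. Once (1) and (2) are verified, Theorem~\ref{thm:map} yields the extension $\ol\alpha^d_\L$, and the final assertion that it is induced by $\wt\M$ is exactly the last clause of that theorem.
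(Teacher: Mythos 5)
Your overall strategy matches the paper's: reduce to the local criterion of Theorem~\ref{thm:map} at points where the local ring is $K[[u_1,\ldots,u_{d+1}]]$ with $t=u_1\cdots u_{d+1}$, and verify conditions (1) and (2) by a combinatorial analysis, carried out by induction on $d$ through the recursive structure of $\wt\C^{d+1}\to\wt\C^d\times_B\C$, finishing condition (2) with Proposition~\ref{prop:generic}. However, there is a genuine gap in your mechanism for condition (2). You claim that once Proposition~\ref{prop:generic} gives the inequality for each \emph{constant} function $h$, condition (1) ``controls the oscillation'' and upgrades it to arbitrary functions $j\col C^{sing}\to\{1,\ldots,d+1\}$. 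This step fails: condition (1) only bounds the per-node variation by $1$, so an arbitrary function can shift the sum $\sum_{N}(a^N_{j(N)}(Y)-b^N_{j(N)}(Y,\L))$ by as much as $q=k_{C_1}$ away from any constant-function value, while the admissible window $(-q/2,\,q/2]$ has width exactly $q$. So Proposition~\ref{prop:generic} plus condition (1) leaves you in an interval of width $3q$, not $q$, and the desired inequality does not follow.

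What the paper actually proves (Proposition~\ref{prop:main}, by induction on $d$ using Lemma~\ref{lem:ordering} and the explicit determination of constructible special point data) is a coordinated monotonicity: in the lexicographic ordering of the $[u_j]$'s, each $a^\l_{[u_j],R}$ is non-decreasing in $j$ with total increase at most $1$, and consequently (via the explicit formula \eqref{eq:b}) the twisting numbers $b_{[u_j],R}$ are non-decreasing with total increase at most $1$, with the crucial feature that $b$ does not depend on the node $\l$. From this one shows that \emph{every} per-node function $F_\l(j)=a^\l_{[u_j],R}-b_{[u_j],R}$ attains its minimum (resp.\ maximum) at one \emph{common} index $h$ independent of $\l$; hence the extrema of $\sum_\l F_\l(j_\l)$ over all functions are attained at constant functions, and only then does Proposition~\ref{prop:generic} close the argument. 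This simultaneity of extrema is the missing idea, and it is also where the prescribed order of blowups genuinely enters (through the cyclicity and monotonicity statements of Proposition~\ref{prop:main}); it cannot be replaced by condition (1). Relatedly, your induction is framed as ``the inductive hypothesis supplies admissibility and quasistability for the first $d$ factors,'' but quasistability at level $d$ does not transfer to level $d+1$, because adding the $(d+1)$-st section changes all the twisters $Z_{\underline{i}}$ and hence all the $b$'s; the induction must be run on the finer combinatorial invariants (the orderings and the numbers $a^\l_{[u_j],R}$), with both conditions of Theorem~\ref{thm:map} verified afterwards in one pass.
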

      
      We devote the rest of this section to prove Theorem \ref{thm:main}.
      
\subsection{Special points}
    
    Define $U_i$, for $i=1,\ldots, d$, as the locus in $\wt{\C}^d$ such that the fiber of $\wt{\pi}$ is the curve $C(i)$ (see Corollary~\ref{cor:chain} for a description of these loci), and $U_0:=\dot\C^d$. Then $U_0,\ldots, U_d$ define a stratification of $\wt{\C}^d$ by locally closed subschemes. Furthermore, note that each neighborhood of $U_d$ intersects every irreducible component of $U_i$. Note that the sheaf $\wt\M$ is $\sigma$-quasistable over $U_0$.\par
    
     In order to prove the theorem we can argue locally on the base. In fact it suffices to prove that there exists a map extending $\alpha^d_\L$ locally around $U_d$, since $\sigma$-quasistability is an open condition by \cite[Prop. 34]{E01} and the restrictions of $\wt\M$ to the fibers over points in a connected component of $U_i$ have constant multidegree.
     
     Let $R$ be a point in $U_d$. We call the point $R$ a \emph{special point}. Locally around $R$, the scheme $\wt{\C}^d$ is given by $S_R=\Spec(K[[u_1,\ldots,u_{d+1}]])$. Let $\iota_R\col S_R\to \wt{\C}^d$ be the natural map. Let also $S_R\to B$ be the restriction of the map $\wt\C^d\ra B$; hence $S_R\to B$ is given by $t=u_1\cdot\ldots\cdot u_{d+1}$. 
     
      We can associate to the special point $R$ a $d$-tuple $(\ell_1,\ell_2,\ldots,\ell_d)$, with $\ell_k\in\{1,\ldots, q\}$, by the rule $\delta_k(R)=N_{\ell_k}$. Also, we can associate to $u_j$ a $d$-tuple  $[\epsilon_1\;\ldots\;\epsilon_d]$ with $\epsilon_j\in\{1,2\}$, where $u_j$ is the local equation of the strict transform $\C_{[u_j]}$ of $C_{\epsilon_1}\times\ldots\times C_{\epsilon_d}$ via $\wt{\C}^d\to \C^d$. Abusing notation we will denote this $d$-tuple by $[u_j]$ and we set $u_j(k):=\epsilon_k$. We define a \emph{special point data} as a set
      \[
      \mathcal{R}:=\{(\l_1,\ldots,\l_d), [u_1],\ldots, [u_{d+1}]\}.
      \]
We call such a data a \emph{constructible special point data} if it arises from a special point $R$ of $\wt{C}^d$. In this case we may simply refer to it as a \emph{special point} and denote it by $R$. \par
  Let $R=\{(\ell_1,\ldots,\ell_d),[u_1],\ldots, [u_{d+1}]\}$ be a special point of $\wt{\C}^d$ and $N_\ell$ be a node of $C$. We have maps $S:=S_R\to \wt{\C}^d$ and $T\to\C$ associated to these points. Then Equations \eqref{eq:Delta} and \eqref{eq:C} induce a collection $\A_{R,\ell}$ of subsets of $\{1,\ldots, d+1\}$ that gives the desingularization of $T_S$, as in Equation \eqref{eq:blowup}. We will call the $\A_{R,\ell}$-ordering of $[u_1],\ldots,[u_{d+1}]$ simply the \emph{$\ell$-ordering of $[u_1],\ldots,[u_{d+1}]$}.

    We proceed now to determine what special point data are constructible. For $d=1$ the only constructible special point data are of the form
    \[
    \{(\l),[1],[2]\}.
    \]
    
For $d=2$, we use Corollary~\ref{cor:rational}. We just need to find each collection $\A_{R,\ell}$ associated with the blowup described by Equations \eqref{eq:Delta} and \eqref{eq:C}. First, note that each special point $\{(\l_1),[1],[2]\}$ in $\wt{\C}^1$ is locally given by $t=[1]\cdot[2]$, and each node $N_{\l_2}$ of $\C$ is given locally by $t=xy$. Therefore, we just need to compute the local equations of the diagonal of $\C^2$ and of each one of the divisors $C_1\times C_1$, $C_1\times C_2$, $C_2\times C_1$ and $C_2\times C_2$. If $\l_2\neq \l_1$ then the diagonal is empty, otherwise the equation of the diagonal is $(x-[1],y-[2])$. On the other hand, the equation of $C_\epsilon\times C_1$ is $(x,[\epsilon])$ and of $C_{\epsilon}\times C_2$ is $(y,[\epsilon])$, for $\epsilon\in\{1,2\}$. By Remark~\ref{rem:diagonal}, the blowup of the diagonal is locally given by the blowup of $V(x,[2])$, while the blowup of $C_{\epsilon}\times C_2$ is locally given by the blowup of $V(x,[3-\epsilon])$. It follows that the $\ell_2$-ordering of $[1],[2]$ is 
\begin{eqnarray*}
[1],[2] & \text{if} & \l_2\neq\l_1\\ 
 \;[2] ,[1] & \text{if} & \l_2 = \l_1.
\end{eqnarray*}
 Note that in Corollary~\ref{cor:rational} the nodes $N_i$ are the special points. Moreover the strict transform of the divisor $V(x,[\epsilon])$ becomes $[\epsilon\; 1]$ and that of $V(y,[\epsilon])$ becomes $[\epsilon\; 2]$. Therefore, the contructible special point data for $\wt{\C}^2$ are
\[
\{(\l,\l),[21],[22],[12]\}\quad\text{and}\quad \{(\l,\l),[21],[11],[12]\}
\]
and 
\[
\{(\l_1,\l_2),[11],[12],[22]\}\quad\text{and}\quad\{(\l_1,\l_2),[11],[21],[22]\},
\]
for $\l_1\neq\l_2$.

For $d=3$, we proceed in a similar fashion.  First, fix a special point $R$ with special point data $\{(\l_1,\l_2),[u_1],[u_2],[u_3]\}$ in $\wt{\C}^2$ and choose a node $N_{\l_3}$ of $C$. The equation of the diagonal $\Delta_{k,3}$ is of the form
\[  
(x-u_{A'_k},y-u_{(A'_k)^c}),
\]
where 
\[
A'_k:=\{j\; |\; u_j(k)=1\quad\text{and}\quad \l_3=\l_k\},
\]
since $x=0$ is the equation of $C_1$. Note that if $\l_3\neq\l_k$ then $A'_k$ is empty and then so is $\Delta_{k,3}$. On the other hand, the local equation of $C_{[u_j]}\times C_{1}$ is $(x,u_j)$ and the one of  $C_{[u_j]}\times C_2$ is $(y,u_j)$. It follows that the blowup of the diagonal $\Delta_{k,3}$ is locally given by the blouwup of $V(x,u_{(A'_k)^c})$ and the blowup of $C_{[u_j]}\times C_2$ is locally given by the blowup of $V(x,u_{\{j\}^c})$.\par 
  Now, for $R=\{(\l,\l),[11],[12],[21]\}$ and $\l_3=\l$, we see that the collection $\A_{R,\ell_3}$ is given by $A_1=(A'_2)^c=\{[12]\}$, $A_2=(A'_1)^c=\{[21]\}$, $A_3=\{[11]\}$, $A_4=\{[12],[21]\}$, and so on. However $(A_1,A_2)$ is a smooth collection. Then the $\ell_3$-ordering of $[11],[12],[21]$ is $[12],[21],[11]$, and hence, by Corollary~\ref{cor:rational}, we have $3$ special points in $\wt{\C}^3$ lying over $R$, that are
\begin{equation}
\label{eq:c3}
\begin{array}{l}
\{(\l,\l,\l),[121],[122],[212],[112]\}\\
\{(\l,\l,\l),[121],[211],[212],[112]\}\\
\{(\l,\l,\l),[121],[211],[111],[112]\}.
\end{array}
\end{equation}
Similarly, for $R=\{(\l,\l),[12],[21],[22]\}$ and $\l_3=\l$, the $\ell_3$-ordering of $[12]$, $[21]$, $[22]$ is $[22],[12],[21]$, then we get the 3 special points
\[
\begin{array}{l}
\{(\l,\l,\l),[221],[222],[122],[212]\}\\
\{(\l,\l,\l),[221],[121],[122],[212]\}\\
\{(\l,\l,\l),[221],[121],[211],[212]\}.
\end{array}
\]
As for the case $R=\{(\l,\l),[11],[12],[21]\}$ and $\l_3\neq\l$, we see that the two diagonals are empty. Therefore $A_1=\{[11]\}$, $A_2=\{[12],[21]\}$, $A_3=\{[12]\}$, $A_4=\{[11],[21]\}$, and so on. We see that $(A_1,A_2,A_3)$ is a smooth collection, and in fact the given desingularization is the same as that given by the collection $(A_1,A_3)$. The $\ell_3$-ordering of $[11],[12],[21]$ is $[11],[12],[21]$. It follows again from Corollary~\ref{cor:rational} that the special points of $\wt{\C}^3$ over $R$ are
\[
\begin{array}{l}
\{(\l,\l,\l_3),[111],[112],[122],[212]\}\\
\{(\l,\l,\l_3),[111],[121],[122],[212]\}\\
\{(\l,\l,\l_3),[111],[121],[211],[212]\}.
\end{array}
\]
Similarly, for $R=\{(\l,\l),[12],[21],[22]\}$ and $\l_3\neq \l$, the $\ell_3$-ordering of $[12]$, $[21]$, $[22]$ is $[12],[21],[22]$, and then we get the 3 special points
\[
\begin{array}{l}
\{(\l,\l,\l_3),[121],[122],[212],[222]\}\\
\{(\l,\l,\l_3),[121],[211],[212],[222]\}\\
\{(\l,\l,\l_3),[121],[211],[221],[222]\}.
\end{array}
\]
As for the case $R=\{(\l_1,\l_2),[11],[12],[22]\}$ with $\l_1\neq\l_2$ and $\l_3=\l_1$, we see that the diagonal $\Delta_{2,3}$ is empty. Therefore $A_1=\{[22]\}$ (it comes from the diagonal $\Delta_{1,3}$), $A_2=\{[11]\}$, $A_3=A_2^c$, $A_4=\{[12]\}$, and so on. We see that $(A_1,A_2)$ is a smooth collection. The $\ell_3$-ordering of $[11],[12],[22]$ is $[22],[11],[12]$. It follows again from Corollary~\ref{cor:rational} that the special points of $\wt{\C}^3$ over $R$ are
\[
\begin{array}{l}
\{(\l_1,\l_2,\l_1),[221],[222],[112],[122]\}\\
\{(\l_1,\l_2,\l_1),[221],[111],[112],[122]\}\\
\{(\l_1,\l_2,\l_1),[221],[111],[121],[122]\}.
\end{array}
\]
Similarly for $R=\{(\l_1,\l_2),[11],[21],[22]\}$ with $\l_1\neq\l_2$ and $\l_3= \l_1$, the $\ell_3$-ordering of $[11],[21],[22]$ is $[21],[22],[11]$, and then we get the special points
\[
\begin{array}{l}
\{(\l_1,\l_2,\l_1),[211],[212],[222],[112]\}\\
\{(\l_1,\l_2,\l_1),[211],[221],[222],[112]\}\\
\{(\l_1,\l_2,\l_1),[211],[221],[111],[112]\}.
\end{array}
\]
As for the case $R=\{(\l_1,\l_2),[11],[12],[22]\}$ with $\l_1\neq\l_2$ and $\l_3=\l_2$, we see that the diagonal $\Delta_{1,3}$ is empty. Therefore $A_1=\{[12],[22]\}$ (it comes from the diagonal $\Delta_{2,3}$), $A_2=\{[11]\}$, $A_3=A_2^c$, $A_4=\{[12]\}$, and so on. We see that
 $(A_1,A_2,A_3,A_4)$ is a smooth collection, and in fact the given desingularization is the same as that given by the collection $(A_1,A_4)$. The $\ell_3$-ordering of $[11],[12],[22]$ is $[12],[22],[11]$. It follows again from Corollary~\ref{cor:rational} that the special points of $\wt{\C}^3$ over $R$ are
\[
\begin{array}{l}
\{(\l_1,\l_2,\l_2),[121],[122],[222],[112]\}\\
\{(\l_1,\l_2,\l_2),[121],[221],[222],[112]\}\\
\{(\l_1,\l_2,\l_2),[121],[221],[111],[112]\}.
\end{array}
\]
Similarly for $R=\{(\l_1,\l_2),[11],[21],[22]\}$ with $\l_1\neq\l_2$ and $\l_3= \l_2$, the $\ell_3$-ordering of $[11],[21],[22]$ is $[22],[11],[21]$, and then we get special points
\[
\begin{array}{l}
\{(\l_1,\l_2,\l_2),[221],[222],[112],[212]\}\\
\{(\l_1,\l_2,\l_2),[221],[111],[112],[212]\}\\
\{(\l_1,\l_2,\l_2),[221],[111],[211],[212]\}.
\end{array}
\]
As for the case $R=\{(\l_1,\l_2),[11],[12],[22]\}$ with $\l_1\neq\l_2$ and $\l_3\neq\l_1,\l_2$, we see that the diagonals are empty. Therefore $A_1=\{[11]\}$, $A_2=A_1^c$, $A_3=\{[12]\}$, $A_4=A_3^c$, and so on. We see that $(A_1,A_2,A_3)$ is a smooth collection, and in fact the given desingularization is the same as that given by the collection $(A_1,A_3)$. The $\ell_3$-ordering of $[11],[12],[22]$ is $[11],[12],[22]$. It follows from Corollary~\ref{cor:rational} that the special points of $\wt{\C}^3$ over $R$ are
\[
\begin{array}{l}
\{(\l_1,\l_2,\l_3),[111],[112],[122],[222]\}\\
\{(\l_1,\l_2,\l_3),[111],[121],[122],[222]\}\\
\{(\l_1,\l_2,\l_3),[111],[121],[221],[222]\}.
\end{array}
\]
Similarly for $R=\{(\l_1,\l_2),[11],[21],[22]\}$ with $\l_1\neq\l_2$ and $\l_3\neq \l_1,\l_2$, the $\ell_3$-ordering of $[11],[21],[22]$ is $[11],[21],[22]$, and then we get special points
\[
\begin{array}{l}
\{(\l_1,\l_2,\l_3),[111],[112],[212],[222]\}\\
\{(\l_1,\l_2,\l_3),[111],[211],[212],[222]\}\\
\{(\l_1,\l_2,\l_3),[111],[211],[221],[222]\}.
\end{array}
\]

Let $R=\{(\l_1,\ldots,\l_d),[u_1],\ldots,[u_{d+1}]\}$ be a special point of $\wt{\C}^d$ and a let $N_{\l_{d+1}}$ be a node of $C$. The special points of $\wt{\C}^{d+1}$ over $(R, N_{\l_{d+1}})\in\wt{\C}^d\times_B\C$ are of the form 
\begin{equation}
\label{eq:special}
\{(\l_1,\ldots,\l_d,\l_{d+1}),[v_1\; 1], [v_2\; 1],\ldots, [v_h\; 1], [v_h \;2],\ldots, [v_{d+1}\; 2]\},
\end{equation}
where $[v_1],[v_2],\ldots, [v_{d+1}]$ is the $\ell_{d+1}$-ordering of $[u_1],\ldots, [u_{d+1}]$, for each $h=1,\ldots, d+1$. Recall that $\A_{R,\ell_{d+1}}$ is the collection associated to the blowup given by Equations \eqref{eq:Delta} and \eqref{eq:C}. As in the case $d=3$, we see that the equation of the diagonal $\Delta_{k, d+1}$ is of the form
\[
(x-u_{A'_k},y-u_{(A'_k)^c})
\]
where 
\[
A'_k=\{j \;|\; u_j(k)=1\quad\text{and}\quad \l_{d+1}=\l_k\}.
\]
In particular if $\l_{d+1}\neq\l_k$, then $A'_k$ is empty and so is $\Delta_{k, d+1}$. If $[u_1],\ldots, [u_{d+1}]$ is written in lexicographical order, then $A_1=(A'_d)^c$, $A_2=(A'_{d-1})^c$, $\ldots$, $A_d=(A'_1)^c$, $A_{d+1}=\{[u_1]\}$, $A_{d+2}=\{[u_1]\}^c$, $A_{d+3}=\{[u_2]\}$, and so on. Note that some of $A_1$,\ldots, $A_{d}$ might be empty, and in the examples above we omitted such sets. We sum-up what we have shown in the following lemma.

\begin{Lem}
\label{lem:ordering}
We have $u_{j_1}<_{\ell_{d+1}} u_{j_2}$ if and only if one of the following conditions holds.
\begin{enumerate}
\item There exists $k_0$ such that $u_{j_1}(k_0)=2$ and $u_{j_2}(k_0)=1$ with $\l_{k_0}=\l_{d+1}$; moreover, $u_{j_1}(k)=u_{j_2}(k)$ for each $k>k_0$ such that $\l_{k}=\l_{d+1}$.
\item For all $k$ such that $\l_k=\l_{d+1}$ we have $u_{j_1}(k)=u_{j_2}(k)$ and there exists $k_0$ such that $\l_{k_0}\neq \l_{d+1}$ with $u_{j_1}(k_0)=1$ and $u_{j_2}(k_0)=2$; moreover, for all $k<k_0$ we have $u_{j_1}(k)=u_{j_2}(k)$.
\end{enumerate}
\end{Lem}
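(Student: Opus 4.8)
The plan is to read off the ordering $<_{\ell_{d+1}}$ directly from the explicit form of the collection $\A_{R,\ell_{d+1}}$ that was determined in the paragraph preceding the lemma, by unwinding the definition of the $\A$-ordering. Recall that this collection consists, in the order the blowups are performed, of the nonempty diagonal sets $(A'_k)^c=\{j\mid u_j(k)=2\}$ coming from $\Delta_{k,d+1}$ (which is nonempty exactly when $\ell_k=\ell_{d+1}$), listed for $k=d,d-1,\ldots,1$ as in \eqref{eq:Delta}, followed by the sets $\{j\}$ and $\{j\}^c$ arising from the divisors $C_{[u_j]}\times C_1$ and $C_{[u_j]}\times C_2$ in \eqref{eq:C}. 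Since \eqref{eq:C} lists the products in lexicographic order and only the patterns $[u_1],\ldots,[u_{d+1}]$ survive at $R$, these latter sets appear sorted by the lexicographic order of the tuples $[u_j]$, with each $\{j\}$ immediately preceding $\{j\}^c$. By the definition of the $\A$-ordering, $u_{j_1}<_{\ell_{d+1}}u_{j_2}$ is decided by the first set of the collection that separates $j_1$ and $j_2$, and the relation holds precisely when $j_1$ lies in that set while $j_2$ does not.

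I would then split into the two cases corresponding to conditions (1) and (2). First suppose some diagonal set separates $j_1$ and $j_2$. Because the diagonals precede the divisors of \eqref{eq:C} and are blown up for $k=d,d-1,\ldots,1$, the first separating set among all of them is $(A'_{k_0})^c$, where $k_0$ is the largest index with $\ell_{k_0}=\ell_{d+1}$ and $u_{j_1}(k_0)\neq u_{j_2}(k_0)$; the maximality of $k_0$ is exactly the requirement that $u_{j_1}(k)=u_{j_2}(k)$ for every $k>k_0$ with $\ell_k=\ell_{d+1}$. Since $(A'_{k_0})^c=\{j\mid u_j(k_0)=2\}$, membership of $j_1$ forces $u_{j_1}(k_0)=2$ and $u_{j_2}(k_0)=1$, which is precisely condition (1).

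For the second case, suppose no diagonal separates $j_1$ and $j_2$, that is, $u_{j_1}(k)=u_{j_2}(k)$ for every $k$ with $\ell_k=\ell_{d+1}$. Then the decision passes to the block of \eqref{eq:C}. Among the sets $\{j\},\{j\}^c$ only $\{j_1\},\{j_1\}^c,\{j_2\},\{j_2\}^c$ separate $j_1$ and $j_2$, and since these sets are ordered by the lexicographic order of the $[u_j]$'s, the first separating one is the singleton attached to whichever of $[u_{j_1}],[u_{j_2}]$ is lexicographically smaller. Hence $u_{j_1}<_{\ell_{d+1}}u_{j_2}$ holds iff $[u_{j_1}]<_{\mathrm{lex}}[u_{j_2}]$, which unwinds to the existence of a first coordinate $k_0$ where the tuples differ, with $u_{j_1}(k_0)=1$, $u_{j_2}(k_0)=2$ and $u_{j_1}(k)=u_{j_2}(k)$ for all $k<k_0$. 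Since the standing hypothesis of this case already forces agreement at every $k$ with $\ell_k=\ell_{d+1}$, this $k_0$ automatically satisfies $\ell_{k_0}\neq\ell_{d+1}$, yielding condition (2). As the two cases are mutually exclusive and exhaust all possibilities, they describe exactly when $u_{j_1}<_{\ell_{d+1}}u_{j_2}$.

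The step that demands the most care—and the only genuine obstacle—is confirming that, after the surviving divisors at $R$ are singled out, the sets of \eqref{eq:C} really do appear in the lexicographic order of the tuples $[u_j]$, together with checking that the skipped empty diagonals (those with $\ell_k\neq\ell_{d+1}$) never interfere with locating the first separating set. Once the explicit form of $\A_{R,\ell_{d+1}}$ is pinned down, the rest is a mechanical application of the definition of $<_\A$.
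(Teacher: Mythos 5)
Your proposal is correct and follows essentially the same route as the paper: the paper itself obtains the lemma by identifying the collection $\A_{R,\ell_{d+1}}$ as $(A'_d)^c,\ldots,(A'_1)^c$ (from the diagonals of \eqref{eq:Delta}, via Remark~\ref{rem:diagonal}, the empty ones being harmless), followed by $\{[u_1]\},\{[u_1]\}^c,\{[u_2]\},\ldots$ in lexicographic order (from \eqref{eq:C}), and then reading off the $\A$-ordering from the first separating set. Your two-case unwinding makes explicit exactly the step the paper leaves implicit when it says the lemma ``sums up'' the preceding discussion.
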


\subsection{Proof of the main theorem}

We now start to check the conditions in Theorem~\ref{thm:map}. Given a special point $R=\{(\l_1,\ldots,\l_d),[u_1],\ldots,[u_{d+1}]\}$ in $\wt{\C}^d$, we only need to compute the numbers $a^N_j(C_1)$ and $b^N_j(C_1,\L)$, where $N$ is a node of $C$ and $j=1,\ldots,d+1$. We observe that $a^N_j(C_1)$ is the number of sections $\delta_k$ such that $\delta_k(R)=N$ and $\delta_k(Q_j)\in C_2$, where $Q_j$ is the generic point of $V(u_j)$. Thus, by the definition of $\delta_k$, we see that $a^N_j(C_1)$ is the number of $k$'s such that $\delta_k(R)=N$ and $u_j(k)=2$. Thus, it is convenient to define
\begin{eqnarray*}
a^\l_{[u_j],R}&:=&a^{N_\l}_j(C_1)=\#\{k\; |\; \ell_k=\l\quad\text{and}\quad u_j(k)=2\},\\
a_{[u_j],R}&:=&(a^1_{[u_j],R},a^2_{[u_j],R},\ldots,a^q_{[u_j],R}),\\
|a_{[u_j],R}|&:=&\sum_{\l=1}^qa^\l_{[u_j],R}.
\end{eqnarray*}
It is easy to check that
\begin{equation}
\label{eq:b}
b_{u_j,R}:=b^N_j(C_1,\L)=\left\lceil\frac{|a_{[u_j],R}|-\deg(L|_{C_2})+e_{C_2}}{q}-\frac{1}{2}\right\rceil.
\end{equation}

\begin{Prop}
\label{prop:main}
Let $R=\{(\l_1,\ldots,\l_d),[u_1],\ldots, [u_{d+1}]\}$ be a constructible special point data with $[u_1],\ldots, [u_{d+1}]$ written in lexicographical order. Let also $N_{\l}$ be a node of $C$ and $[v_1],\ldots, [v_{d+1}]$ be the $\ell$-ordering of $[u_1],\ldots, [u_{d+1}]$ with respect to the node $N_\l$. The following conditions holds.
\begin{enumerate}
\item The permutation $[v_1],\ldots, [v_{d+1}]$ of $[u_1],\ldots, [u_{d+1}]$ is cyclic.
\item $a_{[u_j],R}\leq a_{[u_{j+1}],R}$ for each $j=1,\ldots, d$.
\item $a^\l_{[v_j],R}\geq a^\l_{[v_{j+1}],R}$ for each $j=1,\ldots, d$.
\item $a^\l_{[v_1],R}-a^\l_{[v_{d+1}],R}\leq 1$; furthermore, the equality holds if and only if there exists $i\in\{1,\ldots,d\}$ such that $\l_i=\l$.
\item $|a_{[u_{j+1}],R}|-|a_{[u_j],R}|\leq 1$ for each $j=1,\ldots, d$.
\end{enumerate}
\end{Prop}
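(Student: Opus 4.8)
The plan is to argue by induction on $d$, using the recursive description of constructible special points in \eqref{eq:special} together with the characterization of the $\ell$-ordering in Lemma~\ref{lem:ordering}. The base case $d=1$ is immediate: for $R=\{(\ell_1),[1],[2]\}$ one has $a_{[1],R}=0$ and $a_{[2],R}=\mathbf{e}_{\ell_1}$ (the standard basis vector in position $\ell_1$), while the $\ell$-ordering is $[1],[2]$ if $\ell\neq\ell_1$ and $[2],[1]$ if $\ell=\ell_1$, and all five assertions are checked directly. For the inductive step I fix a constructible $R=\{(\ell_1,\dots,\ell_d),[u_1],\dots,[u_{d+1}]\}$ satisfying (1)--(5) for every node, a node $N_{\ell_{d+1}}$, and one of the points $R'$ over $(R,N_{\ell_{d+1}})$ produced by \eqref{eq:special} for some $h$. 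Everything is driven by the transformation rule for the counting vectors: writing $[w\,\epsilon]$ for a length-$(d+1)$ string, appending a final coordinate only affects the count at the node $\ell_{d+1}$ and only when that coordinate is $2$, so $a_{[w\,1],R'}=a_{[w],R}$ and $a_{[w\,2],R'}=a_{[w],R}+\mathbf{e}_{\ell_{d+1}}$.

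I first treat (2) and (5), which together say exactly that consecutive $a$-vectors in lexicographic order differ by $0$ or by a single basis vector. Sorting the strings of $R'$ lexicographically amounts to sorting the length-$d$ prefixes $u_1<\dots<u_{d+1}$ and breaking ties by suffix; by \eqref{eq:special} the prefix $v_i$ gets suffix $1$ if $i<h$, suffix $2$ if $i>h$, and both suffixes if $i=h$. Using the inductive claim (1)—that the $\ell_{d+1}$-ordering $v_1,\dots,v_{d+1}$ is a cyclic rotation of the lexicographic order, say $v_i=u_{i+t}$ with indices read cyclically—the $\mathbf{e}_{\ell_{d+1}}$-offsets along the lexicographic list are $0$ on one cyclic arc, jump from $0$ to $\mathbf{e}_{\ell_{d+1}}$ across the two copies of the split prefix $v_h$, and equal $\mathbf{e}_{\ell_{d+1}}$ on the complementary arc. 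Hence every lexicographically consecutive pair is handled by the inductive (2) and (5), except the unique pair where the offset drops back from $\mathbf{e}_{\ell_{d+1}}$ to $0$; this pair is precisely $(u_t,u_{t+1})=(v_{d+1},v_1)$, and it is present exactly when $t\geq 1$.

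The crux is this drop pair. First, $t\geq 1$ is equivalent to $\ell_{d+1}\in\{\ell_1,\dots,\ell_d\}$: if $\ell_{d+1}$ does not occur among the $\ell_i$ then no coordinate is $\ell_{d+1}$-relevant and Lemma~\ref{lem:ordering} collapses the $\ell_{d+1}$-order to the lexicographic one, giving $t=0$; conversely, if $\ell_{d+1}$ occurs then the $\ell_{d+1}$-order cannot equal the lexicographic order, since otherwise the equality in the inductive (4) would force $a^{\ell_{d+1}}_{[u_1]}>a^{\ell_{d+1}}_{[u_{d+1}]}$, contradicting the inductive (2). When $t\geq 1$ I must show $a_{[v_1],R}\geq a_{[v_{d+1}],R}+\mathbf{e}_{\ell_{d+1}}$: in the $\ell_{d+1}$-component this is exactly the equality in the inductive claim (4) (available because $\ell_{d+1}$ now occurs among the nodes), and in every other component it follows from the inductive claim (2) applied to the lexicographically consecutive prefixes $u_t=v_{d+1}$ and $u_{t+1}=v_1$. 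This is the step I expect to be the main obstacle, since it is where the cyclic structure, lexicographic monotonicity, and the sharp equality in (4) must be combined; the bound (5) at this pair then follows because the total increases there by exactly $1$.

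It remains to prove (1), (3) and (4) for $R'$ and a node $\ell'$, which I handle by splitting on whether $\ell'=\ell_{d+1}$. If $\ell'=\ell_{d+1}$, the final coordinate is $\ell'$-relevant and of highest index, so Lemma~\ref{lem:ordering} places all suffix-$2$ strings before all suffix-$1$ strings and orders each block by the inductive $\ell_{d+1}$-ordering, yielding $[v_h\,2],\dots,[v_{d+1}\,2],[v_1\,1],\dots,[v_h\,1]$; monotonicity (3) inside each block is the inductive (3), the transition between blocks is controlled by the inductive (4), and the two extreme entries differ in the $\ell'$-component by exactly $1$, which is (4) for $R'$. If $\ell'\neq\ell_{d+1}$, the final coordinate is $\ell'$-irrelevant, so the $\ell'$-order of $R'$ sorts by the inductive $\ell'$-order of the prefixes and only splits the repeated prefix $v_h$ (suffix $1$ before suffix $2$); since the suffix leaves $a^{\ell'}$ unchanged, (3) and (4) for $R'$ reduce verbatim to those for $R$. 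In both cases assertion (1) follows by comparing these explicit orderings with the lexicographic order computed above and verifying that the resulting permutation is a single cyclic rotation; establishing this cyclicity cleanly is the most delicate bookkeeping, and it is precisely what licenses the identification of the lexicographic list used throughout.
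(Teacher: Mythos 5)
Your proposal is correct and takes essentially the same route as the paper's own proof: induction on $d$ driven by the recursive description \eqref{eq:special} and Lemma~\ref{lem:ordering}, with explicit computation of the lexicographic and $\ell$-orderings of the strings over $R'$ and with the equality case of item (4) (equivalently, the dichotomy on whether $\l_{d+1}$ occurs among $\l_1,\ldots,\l_d$) supplying the crucial inequality at the cyclic seam. The differences are purely organizational—you verify (2) and (5) pair-by-pair via the offset/drop-pair bookkeeping and prove (1) last, whereas the paper proves (1) first and then pins down full chains of equalities for the $a^{\l_{d+1}}$-values (its Case 1)—so the mathematical content coincides.
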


\begin{proof}
We proceed by induction on $d$. For $d=1$ the constructible special point data is of the form $\{(\l),[1],[2]\}$ and hence it satisfies all the stated conditions. Now, assume that these conditions  hold for $d$. Let $N_{\l_{d+1}}$ be a node of $C$, and let $R':=\{(\l_1,\ldots,\l_{d+1}),[w_1],\ldots, [w_{d+2}]\}$ be a special point of $\wt{\C}^{d+1}$ over $(R,N_{\l_{d+1}})$. 

 We begin by proving item (1). Let $[\wt{v}_1],\ldots, [\wt v_{d+1}]$ be the $\l_{d+1}$-ordering of $[u_1],\ldots,[u_{d+1}]$. It follows from Equation \eqref{eq:special} that $[w_1],\ldots, [w_{d+2}]$ is a permutation of 
\begin{equation}
\label{eq:v}
[\wt v_1\; 1],\ldots, [\wt v_h\;1],[\wt v_h\;2],\ldots,[\wt v_{d+1}\; 2].
\end{equation}
By Lemma~\ref{lem:ordering} the $\l_{d+1}$-ordering of the Equation \eqref{eq:v} is
\begin{equation}
\label{eq:wtv}
[\wt v_h\;2],\ldots,[\wt v_{d+1}\; 2], [\wt v_1\; 1],\ldots, [\wt v_h\;1].
\end{equation}
Let $[{v}_1],\ldots, [{v}_{d+1}]$ be the $\l$-ordering of $[u_1],\ldots, [u_{d+1}]$ for $\l\neq \l_{d+1}$. By induction hypothesis, the following relations hold for some $h_0$
\[
\wt v_{h_0}={v}_{1},\;\ldots,\; \wt v_{d+1}={v}_{d+2-h_0},\; \wt v_{1}={v}_{d+3-h_0},\;\ldots, \wt v_{h_0-1}={v}_{d+1}.
\]
If $h_0\leq h$, then it follows from Lemma~\ref{lem:ordering} that the $\l$-ordering of Equation \eqref{eq:v} is
\begin{equation}
\label{eq:wtvh1}
[\wt v_{h_0}\; 1],[\wt v_{h_0+1}\; 1],\ldots,[\wt v_h\; 1],[\wt v_h\; 2],\ldots, [\wt v_{d+1} \; 2], [\wt v_1\; 1],\ldots, [\wt v_{h_0-1}\; 1].
\end{equation}
If $h_0>h$, then the $\l$-ordering of Equation \eqref{eq:v} is
\begin{equation}
\label{eq:wtvh2}
[\wt v_{h_0}\; 2],[\wt v_{h_0+1}\; 2],\ldots,[\wt v_{d+1}\; 2],[\wt v_1\; 1],\ldots, [\wt v_{h} \; 1], [\wt v_h\; 2],\ldots, [\wt v_{h_0-1}\; 2].
\end{equation}

 Since by induction hypothesis, $[\wt v_1],\ldots,[\wt v_{d+1}]$ is a cyclic permutation of $[u_1],\ldots,[u_{d+1}]$ and the latter is in lexicographical order, the following relations hold for some  $h'$ 
\[
\wt v_{h'}=u_1,\;\ldots,\; \wt v_{d+1}=u_{d+2-h'}, \;\wt v_1=u_{d+3-h'},\; \ldots, \;\wt v_{h'-1}=u_{d+1}.
\]
If $h'\leq h$, then the lexicographical ordering of Equation \eqref{eq:v} is
\begin{equation}
\label{eq:vh1}
[\wt v_{h'}\; 1],\; [\wt v_{h'+1}\; 1],\;\ldots, \;[\wt v_h\; 1],\; [\wt v_h\; 2],\;\ldots, [\wt v_{d+1}\; 2],\; [\wt v_1\; 1],\;\ldots, [\wt v_{h'-1}\; 1].
\end{equation}
If $h'>h$, then the lexicographical ordering of Equation \eqref{eq:v} is
\begin{equation}
\label{eq:vh2}
[\wt v_{h'}\; 2],\; [\wt v_{h'+1}\; 2],\;\ldots, \;[\wt v_{d+1}\; 2],\; [\wt v_1\; 1],\;\ldots, [\wt v_h\; 1],\; [\wt v_h\; 2],\;\ldots, [\wt v_{h'-1}\; 2].
\end{equation}
We conclude that if $[w_1],\ldots, [w_{d+2}]$ is in lexicographical order (see Equations \eqref{eq:vh1} and \eqref{eq:vh2}), then their $\l$-ordering (see Equations \eqref{eq:wtvh1} and \eqref{eq:wtvh2}) is obtained by a cyclic permutation. The proof of the first item is complete.

 Recall that, by induction hypothesis, items (2), (3) and (4) hold for $d$. We want to prove that these items also hold for $d+1$. We can rewrite Equations \eqref{eq:vh1} and \eqref{eq:vh2} as follows
\begin{equation}
\label{eq:uh1}
\begin{array}{lr}
[u_1\; 1],\;\ldots, \;[u_{h+1-h'}\; 1],\; [u_{h+1-h'}\; 2],\;\ldots & \\
&\hspace{-1in} \ldots,[u_{d+2-h'}\; 2],\; [u_{d+3-h'}\; 1],\;\ldots, [u_{d+1}\; 1].
\end{array}
\end{equation}
and
\begin{equation}
\label{eq:uh2}
\begin{array}{lr}
[u_1\; 2],\;\ldots, \;[u_{d+2-h'}\; 2],\; [u_{d+3-h'}\; 1],\;\ldots & \\
&\hspace{-1.3in}\ldots, [u_{d+2+h-h'}\; 1],\; [u_{d+2+h-h'}\; 2],\;\ldots, [u_{d+1}\; 2].
\end{array}
\end{equation}

   If $\l\neq \l_{d+1}$, we have that $a^\l_{[u_j\; \epsilon],R'}=a^\l_{[u_j],R}$ for $\epsilon=1,2$; moreover, the $\l-$ordering of $[w_1],\ldots, [w_{d+2}]$ is essentially the same of $[u_1],\ldots, [u_{d+1}]$, as we can see in Equations \eqref{eq:wtvh1} and \eqref{eq:wtvh2}. Therefore, in this case, we have nothing to prove.\par
    Consider now the case $\l=\l_{d+1}$. We have 
\[
a^{\l_{d+1}}_{[u_j\; 1],R'}=a^{\l_{d+1}}_{[u_j],R}\quad\text{and}\quad a^{\l_{d+1}}_{[u_j\;2],R'}=a^{\l_{d+1}}_{[u_j],R}+1.
\]
We have now two cases. \par
\smallskip

\textbf{Case 1}. Assume that there exists $i\in\{1,\ldots,d\}$ such that $\l_i=\l_{d+1}$. Using the induction hypothesis, it is easy to see that the following relations hold 
\[
a^{\l_{d+1}}_{[u_1],R}+1=\ldots=a^{\l_{d+1}}_{[u_{d+2-h'}],R}+1=a^{\l_{d+1}}_{[u_{d+3-h'}],R}=\ldots=a^{\l_{d+1}}_{[u_{d+1}],R}.
\]
Note that the following relations also hold
\[
a^{\l_{d+1}}_{[u_{d+2-h'}\;2],R'}=a^{\l_{d+1}}_{[u_{d+2-h'}],R}+1=a^{\l_{d+1}}_{[u_{d+3-h'}],R}=a^{\l_{d+1}}_{[u_{d+3-h'}\;1],R'}.
\]
Therefore, with respect to Equation \eqref{eq:uh1}, we have
\[
a^{\l_{d+1}}_{[u_1\;1],R'}+1=\ldots=a^{\l_{d+1}}_{[u_{h+1-h'}\;1],R'}+1=a^{\l_{d+1}}_{[u_{h+1-h'}\;2],R'}=\ldots=a^{\l_{d+1}}_{[u_{d+1}\;2],R'},
\]   
while with respect to Equation \eqref{eq:uh2}, we have
\[
\begin{array}{lr}
a^{\l_{d+1}}_{[u_1\;2],R'}+2=\ldots=a^{\l_{d+1}}_{[u_{d+2+h-h'}\;1],R'}+2= & \\ &
 \hspace{-1in}=a^{\l_{d+1}}_{[u_{d+2+h-h'}\;2],R'}+1=\ldots=a^{\l_{d+1}}_{[u_{d+1}\;2],R'}+1.
 \end{array}
\]      
This proves item (2). 
To prove items (3) and (4), we just note that the $\l_{d+1}$-ordering of $[w_1],\ldots,[w_{d+2}]$ is given by Equation \eqref{eq:wtv}; moreover, in the case of Equation \eqref{eq:uh1} we have $\wt v_h=u_{h+1-h'}$, while in the case of Equation \eqref{eq:uh2} we have $\wt v_h=u_{d+2+h-h'}$. In particular, we have equality in item (4). \par
\smallskip

\textbf{Case 2}. Assume that there is no $i\in\{1,\ldots,d\}$ with $\l_i=\l_{d+1}$. Then the $\l_{d+1}$-ordering of $[u_1],\ldots,[u_{d+1}]$ is simply $[u_1],\ldots,[u_{d+1}]$.  In this case, using Equation \eqref{eq:v} (with $\wt v_i=u_i$) and the fact that $a^{\l_{d+1}}_{[u_j],R}=0$, we see that items (2), (3) and (4) readily hold.\par
  Finally, item (5) follows from Equations \eqref{eq:uh1} and \eqref{eq:uh2}, observing that
\[
|a_{[u_j\;\epsilon],R'}|=|a_{[u_j],R}|+\epsilon-1.
\]   
\end{proof}

\begin{proof}[Proof of Theorem \ref{thm:main}]
To conclude the proof of Theorem \ref{thm:main}, we have to check the conditions of Theorem~\ref{thm:map} for every special point $R$ of $\wt{\C}^d$. With the notation of this section, these conditions become
\begin{enumerate}
\item[(1)] for every $j_1,j_2=1,\ldots,d+1$ and every node $N_\l$ of $C$, we have
\[
|(a^\l_{[u_{j_1}],R}-b_{[u_{j_1}],R})-(a^\l_{[u_{j_2}],R}-b_{[u_{j_2}],R})|\leq 1.
\]
\item[(2)] for every $j_1,\ldots,j_q\in\{1,\ldots,d+1\}$, we have
\[
-\frac{q}{2}<\deg(\L|_{C_1})-e_{C_1}+\sum_{\l=1}^q(a^\l_{[u_{j_\l}],R}-b_{[u_{j_\l}],R})\leq \frac{q}{2}.
\]
\end{enumerate}

First, we note that by item (2) of Proposition~\ref{prop:main} and by Equation \eqref{eq:b} we have 
\[
b_{[u_i],R}\leq b_{[u_{i+1}],R}\quad\text{for every }i=1,\ldots, d.
\]
Moreover, by item (4) of Proposition~\ref{prop:main}, we get  $|a_{[u_{d+1}],R}|-|a_{[u_1],R}|\leq q$ and hence $b_{[u_{d+1}],R}-b_{[u_1],R}\leq 1$.

 We now prove condition (1). Assume without loss of generality that $j_1>j_2$. By items (2) and (4) of Proposition~\ref{prop:main} we see that 
\[
0\leq a^\l_{[u_{j_1}],R}-a^\l_{[u_{j_2}],R}\leq 1
\]
and, by the observation above, that
\[
0\leq b_{[u_{j_1}],R}-b_{[u_{j_2}],R}\leq 1.
\]
Therefore, condition (1) holds.

   As for condition (2), we just have to compute the minimum and maximum of the function
\[
F(j_1,\ldots,j_q)=\sum_{\l=1}^q(a^\l_{[u_{j_\l}],R}-b_{[u_{j_\l}],R}).
\]
Clearly it is enough to find the minimum and maximum of each function
\[
F_\l(j):=a^\l_{[u_j],R}-b_{[u_{j}],R}.
\]
Since $b_{[u_{i}],R}\leq b_{[u_{i+1}],R}$ and $b_{[u_{d+1}],R}-b_{[u_1],R}\leq 1$, we have two cases. In the first case, we have $b_{[u_i],R}=b_{[u_j],R}$ for every $i,j\in\{1,\ldots,d+1\}$; in the second case, there exists $h$ such that the following relations hold
\[
b_{[u_1],R}+1=\ldots=b_{[u_h],R}+1=b_{[u_{h+1}],R}=\ldots=b_{[u_{d+1}],R}.
\]
In the first case, it follows from item (2) of Proposition~\ref{prop:main} that the mimimum of $F_\l$ is attained at $j=1$, while the maximum is attained at $j=d+1$. On the other hand, in the second case, we claim that the minimum of $F_\l$ is attained at $j=h+1$. Indeed, using item (4) of Proposition~\ref{prop:main}, we see that for every $j\leq h$ we have
\[
a^\l_{[u_{h+1}],R}-b_{[u_{h+1}],R}=(a^\l_{[u_{h+1}],R}-1)-b_{[u_j],R}\leq a^\l_{[u_j],R}-b_{[u_j],R}.
\]
On the other hands, using item (2) of Proposition~\ref{prop:main}, we see that for every $j>h+1$ we have
\[
a^\l_{[u_{h+1}],R}-b_{[u_{h+1}],R}=a^\l_{[u_{h+1}],R}-b_{[u_j],R}\leq a^\l_{[u_j],R}-b_{[u_j],R}.
\]
 Similarly, one can show that the maximum of $F_\l$ is attained at $j=h$.\par
 By the arguments above, there exists some $h$, such that the minimum (respectively maximum) of $F(j_1,\ldots,j_q)$ is attained at $(h,h,\ldots,h)$. It follows from Proposition~\ref{prop:generic} that the sum 
\[
\deg(\L_{C_1})-e_{C_1}+\sum_{\l=1}^q (a^\l_{[u_h],R}-b_{[u_h],R})
\]
 satisfies condition (2). This concludes the proof of Theorem \ref{thm:main}.
\end{proof}

%\section*{Acknowledgments} 

\bigskip
\noindent{\smallsc Alex Abreu, Universidade Federal Fluminense,\\ Rua M. S. Braga, s/n, Valonguinho, 24020-005 Niter\'oi (RJ) Brazil.}\\
{\smallsl E-mail address: \small\verb?alexbra1@gmail.com?}

\bigskip
\bigskip
\noindent{\smallsc Juliana Coelho, Universidade Federal Fluminense,\\ Rua M. S. Braga, s/n, Valonguinho, 24020-005 Niter\'oi (RJ) Brazil}\\
{\smallsl E-mail address: \small\verb?julianacoelho@vm.uff.br?}
\bigskip
\bigskip

\noindent{\smallsc Marco Pacini, Universidade Federal Fluminense,\\ Rua M. S. Braga, s/n, Valonguinho, 24020-005 Niter\'oi (RJ) Brazil}\\
{\smallsl E-mail address: \small\verb?pacini@impa.br? and \small\verb?pacini@vm.uff.br?}


\begin{thebibliography}{llll}

\bibitem{AK} A. Altman and S. Kleiman, 
\emph{Compactifying the Picard scheme}, 
Adv. Math. {\bf 35} (1980) 50--112. 

%\bibitem{BLR} S. Bosch, W. L\"utkebohmert and M. Raynaud, 
%\emph{N\'eron models}. 
%Ergebnisse der Mathematik und ihrer Grenzgebiete, vol. {\bf 21}, 
%Springer Verlag (1990).

\bibitem{C} L. Caporaso, 
\emph{A compactification of the universal Picard variety 
over the moduli space of stable curves}. 
J. Amer. Math. Soc. {\bf 7} (1994) 589--660.

%\bibitem{C06} L. Caporaso, \emph{Naturality of Abel maps}. 
%Manuscripta Math. {\bf 123} (2007) 53--71.

%\bibitem{CAJM} 
%L. Caporaso, 
%\emph{N\'eron models and compactified Picard schemes over the moduli
%stack of stable surves}.
%Amer. J. Math. {\bf 130} (2008), 1--47.

\bibitem{CE} L. Caporaso and E. Esteves, 
\emph{On Abel maps of stable curves}. 
Michigan Math. J. {\bfseries 55} (2007) 575--607.

\bibitem{CCE} L. Caporaso, J. Coelho and E. Esteves, 
\emph{Abel maps of Gorenstein cuves}. 
Rendiconti del Circolo Matematico di Palermo {\bfseries  57} (2008), 
33--59.


\bibitem{Co} J. Coelho,
\emph{Abel maps for reducible curves}. 
Doctor Thesis, IMPA, Rio de Janeiro, 2006.

\bibitem{CEP} J. Coelho, E. Esteves and M. Pacini,
\emph{Degree-$2$ Abel maps for nodal curves}.
Preprint 2012. Available at http://arxiv.org/abs/1212.1123.

\bibitem{CP} J. Coelho and M. Pacini,
\emph{Abel maps for curves of compact type}. 
J. Pure Appl. Algebra  {\bfseries 214}  (2010),  no. 8, 1319--1333.


\bibitem{CG} M. Coppens and L. Gatto, 
\emph{Limit Weierstrass schemes on stable curves with $2$ irreducible components}. 
Atti Accad. Naz. Lincei {\bfseries 9} (2001), 205--228.

%\bibitem{Es} E. Esteves,
%\emph{Linear systems and ramification points on 
%reducible nodal curves}, 
%In: Algebra Meeting (Rio de Janeiro, 1996), 21--35, 
%Mat. Contemp., vol. 14, 1998, Soc. Bras. Mat., 
%Rio de Janeiro, 1998.

\bibitem{EH} D. Eisenbud and J. Harris, 
\emph{Limit linear series: Basic theory}. 
Invent. Math. {\bf 85} (1986) 337--371.

\bibitem{E01} E. Esteves, 
\emph{Compactifying the relative Jacobian over 
families of reduced curves}. 
Trans. Amer. Math. Soc. {\bfseries 353} (2001), 3045--3095.

%\bibitem{EGK} E. Esteves, M. Gagn\'e and S. Kleiman,
%\emph{Autoduality of the compactified Jacobian}.
%J. London Math. Soc. (2) {\bf 65} (2002) 591--610.

\bibitem{EM} E. Esteves and N. Medeiros, \emph{Limit canonical systems on 
curves with two components}. Invent. Math. {\bf 149} (2002) 267--338.

\bibitem{EO} E. Esteves and B. Osserman, 
\emph{Abel maps and limit linear series}. 
Available at http://arxiv.org/abs/1102.3191. To appear in the
Rendiconti del Circolo Matematico di Palermo.

%\bibitem{EP} E. Esteves and P. Salehyan, 
%\emph{Limit Weierstrass points on nodal reducible curves}. 
%Trans. Amer. Math. Soc. 359 (2007), 5035--5056.


%\bibitem{EH2} D. Eisenbud and J. Harris, 
%\emph{Progress in the theory of complex algebraic curves}. 
%Bull. Amer. Math. Soc. (N.S.)  {\bf 21}  (1989),  205--232.

\bibitem{Gi} D. Gieseker, 
\emph{Stable curves and special divisors: Petri's conjecture}. 
Invent. Math. {\bf 66} (1982), 251--275.

\bibitem{GH} P. Griffiths and J. Harris,
\emph{On the variety of special linear systems on a general algebraic curve}.
Duke Math. J. {\bf 47} (1980), 233--272. 

%\bibitem{HM} J. Harris and I. Morrison,
%\emph{Moduli of curves}, Graduate Texts in Mathematics, vol. 187, 
%Springer-Verlag, New York, 1998.

%\bibitem{Kmult} S. Kleiman,
%\emph{Multiple-point formulas. II. The Hilbert scheme}. In: 
%``Enumerative geometry'' (Sitges, 1987), Lecture Notes in Math., vol. 1436, 
%Springer, Berlin, 1990.

%\bibitem{KPic} S. Kleiman, 
%\emph{The Picard scheme}. In: Fundamental algebraic geometry, 235--321, 
%Math. Surveys Monogr., 123, Amer. Math. Soc., Providence, RI, 2005.

%\bibitem{M} L. Main\`o,
%\emph{Moduli space of enriched stable curves}. Ph.D.~Thesis, 
%Harvard University, Cambridge, 1998.

%\bibitem{MeVi} M. Melo and F. Viviani, \emph{Fine compactified Jacobians}. 
%Math. Nachr. {\bf 285} (2012) 997--1031.

\bibitem{O} B. Osserman, \emph{A limit linear series moduli scheme}. 
Annales de l'Institut Fourier {\bf 56} (2006) 1165--1205.

\bibitem{Pac} M. Pacini, \emph{The resolution of the degree-2 Abel--Jacobi map for nodal curves}, preprint 2013.

\bibitem{Pand} R. Pandharipande, \emph{A compactification over $\ol{M_g}$ of the universal moduli space of slope-semistable vector bundles}, J. Amer. Math. Soc. {\bf 9} (1996) 425--471.

%\bibitem{R} Z. Ran, 
%\emph{Degenerations of linear series}, 
%Preprint (1985).

%\bibitem{Ri} B. Riemann, 
%\emph{Theorie der Abel'schen Funktionen}, 
% J. Reine Angew. Math. {\bf 54} (1857), 115--155.

%\bibitem{Sesh} C. S. Seshadri,
%\emph{Fibr\'es vectoriels sur les courbes alg\'ebriques},
%Ast\'erisque {\bf 96} (1982).


\end{thebibliography}
\end{document}